\Crefname{subsection}{Subsection}{Subsections}
\crefname{subsection}{subsection}{subsections}
\theoremstyle{definition}
\newtheorem{definition}[subsubsection]{\definitionautorefname}
\newcommand{\definitionautorefname}{Definition}
\newtheorem{construct}[subsubsection]{\constructautorefname}
\newcommand{\constructautorefname}{Construction}
\theoremstyle{remark}
\newtheorem{remark}[subsubsection]{\remarkautorefname}
\newcommand{\remarkautorefname}{Remark}
\newtheorem{subremark}[paragraph]{\remarkautorefname}
\newtheorem{supremark}{\remarkautorefname}
\newtheorem{exmp}[subsubsection]{\exmpautorefname}
\newcommand{\exmpautorefname}{Example}
\newtheorem{subexmp}[paragraph]{\exmpautorefname}
\newtheorem{warning}[subsubsection]{\warningautorefname}
\newcommand{\warningautorefname}{Warning}
\theoremstyle{plain}
\newtheorem{thm}[subsubsection]{\thmautorefname}
\newcommand{\thmautorefname}{Theorem}
\newtheorem{supthm}[supremark]{\thmautorefname}
\newtheorem{corlr}[subsubsection]{\corlrautorefname}
\newcommand{\corlrautorefname}{Corollary}
\newcommand{\prorautorefname}{Property}
\newtheorem{pros}[subsubsection]{\prosautorefname}
\newcommand{\prosautorefname}{Proposition}
\newtheorem{lemma}[subsubsection]{\lemmaautorefname}
\newcommand{\lemmaautorefname}{Lemma}
\newcommand{\addchar}[2]{%
  \@tfor\letter:=#1\do{%
    \letter#2
  }%
}
\newcommand{\id}{\mathds{1}}%
\newcommand{\cat}[1]{\mathfrak{#1}}
\newcommand{\opcat}[1]{\cat{#1}^{\mathrm{op}}}%
\newcommand{\func}[1]{\mathcal{\addchar{#1}{\!}}\,}
\newcommand{\shname}[1]{\StrSplit{#1}{1}{\debnom}{\resnom}\mathit{\mathscr{\debnom}\resnom}}%
\newcommand{\lmtimes}{\mathop{\times}\limits}
\newcommand{\virshf}[1]{\left[\shname{O}_{#1}^{\mathrm{vir}}\right]}
\newcommand{\compac}[1]{\overline{#1}}
\newcommand{\compmodsp}[2][0]{\compac{\mathcal{M}}_{#1,#2}}
\newcommand{\compmodspo}[1][n]{\compmodsp{#1}}
\newcommand{\compmodcurv}[2][0]{\compac{\mathcal{C}}_{#1,#2}}
\newcommand{\modstk}[2][0]{\cat{M}_{#1,#2}}
\newcommand{\stkcurv}[2][0]{\cat{C}_{#1,#2}}
\DeclareMathOperator{\cofib}{cofib} 
\DeclareMathOperator{\map}{hom} 
\DeclareMathOperator{\spec}{Spec} 
 \DeclareMathOperator{\ev}{ev}
\DeclareMathOperator{\rmap}{\mathbb{R}\shname{Map}}
\DeclareMathOperator{\pr}{pr} \DeclareMathOperator{\specialis}{sp}
\DeclareMathOperator{\truncat}{\func{t}_{0}}
\DeclarePairedDelimiter{\uly}{\lvert}{\rvert}
\DeclarePairedDelimiter{\restr}{{}}{\rvert}
\title{A categorification of the quantum Lefschetz % hyperplane
  principle}%
\author{David Kern}%
\address{David Kern, IMAG, Université de Montpellier, CNRS,
  Montpellier, France}%
\email{\href{mailto:david.kern@umontpellier.fr}{david.kern@umontpellier.fr}}%
\urladdr{\href{https://dskern.github.io/}{https://dskern.github.io/}}%
\thanks{The author acknowledges funding from the grant of the Agence
  Nationale de la Recherche ``Categorification in Algebraic Geometry''
  ANR-17-CE40-0014}
\begin{document}

\begin{abstract}
  The quantum Lefschetz formula explains how virtual fundamental
  classes (or structure sheaves) of moduli stacks of stable maps
  behave when passing from an ambient target scheme to the zero locus
  of a section. It is only valid under special assumptions (genus $0$,
  regularity of the section and convexity of the bundle). In this
  paper, we give a general statement at the geometric level removing
  these assumptions, using derived geometry. Through a study of the
  structure sheaves of derived zero loci we deduce a categorification
  of the formula in the $\infty$-categories of quasi-coherent
  sheaves. We also prove that Manolache's virtual pullbacks can be
  constructed as derived pullbacks, and use them to recover the
  classical Quantum Lefschetz formula when its hypotheses are
  satisfied.
\end{abstract}

\maketitle{}

\tableofcontents{}

\section{Introduction}
\label{sec:introduction}

\renewcommand{\thesupremark}{\Alph{supremark}}

\subsection{The quantum Lefschetz hyperplane principle}
\label{sec:quant-lefsch-hyperpl}

Any quasi-smooth derived scheme is Zariski-locally presented as the
(derived) zero locus of a section of a vector bundle on some smooth
scheme. The Lefschetz hyperplane theorem then gives a way of
understanding the cohomology of such a zero locus from the data of
that of the ambient scheme and of the vector bundle. The quantum
Lefschetz principle, similarly, gives the quantum cohomology, that is
the Gromov--Witten theory, of the zero locus from that of the ambient
scheme and the Euler class of the vector bundle.

Let $X$ be a smooth projective variety and let $E$ be a vector bundle
on $X$, and consider the abelian cone stack
$\mathbb{R}^{0}\func{p}_{\ast}\ev^{\ast}E$ on
$\compmodsp[g]{n}(X,\beta)$, where
$\ev\colon\compmodcurv[g]{n}(X,\beta)\to X$ is the canonical
evaluation map (corresponding by the isomorphism
$\compmodcurv[g]{n}(X,\beta)\simeq\compmodsp[g]{n+1}(X,\beta)$ to
evaluation at the $(n+1)$th marking) and
$\func{p}\colon\compmodcurv[g]{n}\to\compmodsp[g]{n}$ is the
projection. Let $s$ be a regular section of $E$ and
$i\colon Z\hookrightarrow X$ be its zero locus. An inspection of the
moduli problems (see the proof of~\cref{corlr:main-result}) reveals
that the disjoint union, over all classes $\gamma\in A_{1}Z$ mapped by
$i_{\ast}$ to $\beta$, of the moduli stacks of stable maps to $Z$ of
degree $\gamma$ coincides with the zero locus of the induced section
$\mathbb{R}^{0}\func{p}_{\ast}\ev^{\ast}s$ of
$\mathbb{R}^{0}\func{p}_{\ast}\ev^{\ast}E$. The natural question,
leading to the quantum Lefschetz theorem, is whether this
identification remains true at the ``virtual'' level, which was
conjectured by Cox, Katz and Lee in~\cite[Conjecture
1.1]{cox01:_virtual}. It was indeed proved
in~\cite{kim03:_funct_cox_katz_lee} for Chow homology, and the
statement was lifted in~\cite{joshua10:_rieman} to $G_{0}$-theory,
that under assumptions on $E$ the Gromov--Witten theory of $Z$ is
equivalent to that of $X$ twisted by the Euler class of $E$, in that
the following holds.
\begin{supthm}[{\cite{kim03:_funct_cox_katz_lee,joshua10:_rieman}}]
  \label{thm:state-art-qlefschetz}
  For any $\gamma\in A_{1}Z$ such that $i_{\ast}\gamma=\beta$, let
  $u_{\gamma}\colon\compmodspo(Z,\gamma)
  \hookrightarrow\compmodspo(X,\beta)$ denote the closed immersion.
  Suppose $E$ is \textbf{convex}, that is
  $\mathbb{R}^{1}p_{\ast}(C,\mu^{\ast}E)=0$ for any stable map
  $\mu\colon C\to X$ from a rational (\emph{i.e.} genus-$0$) stable
  curve $C\xrightarrow{p}S$ (so that the cone
  $\mathbb{R}^{0}\func{p}_{\ast}\ev^{\ast}E$ is a vector bundle). Then
  \begin{equation}
    \label{eq:kkp}
    \sum_{i_{\ast}\gamma=\beta}u_{\gamma,\ast}
    \left[\compmodspo(Z,\gamma)\right]^{\mathrm{vir}}
    =\left[\compmodspo(X,\beta)\right]^{\mathrm{vir}}
    \smile\operatorname{c}_{\mathrm{top}}(
    \mathbb{R}^{0}\func{p}_{\ast}\ev^{\ast}E) \in
    A_{\bullet}\left(\compmodspo(X,\beta)\right)\text{,}
  \end{equation}
  and
  \begin{equation}
    \label{eq:joshua}
    \sum_{i_{\ast}\gamma=\beta}u_{\gamma,\ast} \virshf{\compmodspo(Z,\gamma)}
    =\virshf{\compmodspo(X,\beta)} \otimes
    \lambda_{-1}(\mathbb{R}^{0}\func{p}_{\ast}\ev^{\ast}E) \in
    G_{0}\left(\compmodspo(X,\beta)\right)\text{.} 
  \end{equation}
\end{supthm}

It was shown in~\cite{coates12:_lefsc} that the quantum Lefschetz
principle as stated in~\eqref{eq:kkp} can be false when the vector
bundle $E$ is not convex (or as soon as $g$ is greater than $0$).  The
reason for this is that $\mathbb{R}^{0}\func{p}_{\ast}\ev^{\ast}E$ no
longer equals $\mathbb{R}\func{p}_{\ast}\ev^{\ast}E$ and the twisting
Euler class should be corrected by taking into account the term
$\mathbb{R}^{1}\func{p}_{\ast}\ev^{\ast}E$: in other words, one should
use the full derived pushforward and view the induced cone as a
\emph{derived} vector bundle $\mathbb{R}\func{p}_{\ast}\ev^{\ast}E$.
This will require viewing our moduli stacks through the lens of
derived geometry.

In this note, we use this philosophy to undertake the task of
simultaneousley relaxing the hypotheses
on~\cref{thm:state-art-qlefschetz} and lifting it to a categorified
(and a geometric) statement, by which we mean that:
\begin{itemize}
\item we will give a formula at the level of a derived
  $\infty$-category of quasicoherent sheaves,
\item we will not need to fix the genus to $0$,
\item we will not need to assume that $E$ is convex, or in fact a
  classical vector bundle (\emph{i.e.} it can come from any object
  of the $\infty$-category $\cat{Perf}(\shname{O}_{X})$),
\item we will not need to assume that the section is regular, as we
  can allow the target to be any derived scheme (or even a
  $1$-algebraic derived stack) rather than a smooth scheme.
\end{itemize}

We note however that only the categorified form of the formula will
hold in full generality, as the usual convexity (and genus) hypotheses
are still needed to ensure bounded-coherence conditions so as to
decategorify to $G$-theory.

\subsection{Derived moduli stacks and virtual classes}
\label{sec:deriv-moduli-stacks}

In~\cite{mann18:_brane_gromov_witten_k}, the categorification of
Gromov--Witten classes, as a lift from operators between
$G_{0}$-theory groups to dg-functors between dg-categories of
quasicoherent (or coherent, or perfect) $\shname{O}$-modules, was
achieved through the use of derived algebraic geometry. Indeed, this
language allows one to interpret the homological corrections appearing
in classical algebraic geometry as actual geometric objects; in
particular the virtual structure sheaf $\virshf{\compmodspo(X,\beta)}$
was realised as the actual structure sheaf of a derived thickening
$\mathbb{R}\compmodsp[g]{n}(X,\beta)$ of the moduli stack, so that
applying the $(\infty,2)$-functor $\cat{QCoh}$ to the appropriate
correspondences produces the desired lift of Gromov--Witten theory.

The idea of viewing the virtual fundamental class as a shadow of a
higher structure sheaf was introduced
in~\cite{kontsevich95:_enumer_ration_curves_via_torus_action}, and
made more precise first in~\cite{ciocan-fontanine09:_virtual} using
the language of dg-schemes and in~\cite[\S{}3.1]{toen14:_deriv}
\emph{via} derived geometry. The derived moduli stack of stable maps
$\mathbb{R}\compmodsp[g]{n}(X,\beta)$ was constructed
in~\cite{ciocan-fontanine02:_deriv_hilber}
and~\cite{schurg15:_deriv}. Finally,
\cite{mann18:_brane_gromov_witten_k} showed that the virtual structure
sheaf really is given by the structure sheaf of the derived
thickening, or rather its image by the isomorphism expressing that
$G$-theory does not detect thickenings. Hence, in order to
understand~\cref{thm:state-art-qlefschetz} from a completely geometric
point of view, the role of the virtual classes should indeed be played
by derived moduli stacks.

We may now state the main result of this note, which addresses the
question of similarly understanding the virtual statement of the
quantum Lefschetz principle as a derived geometric phenomenon, and of
deducing an expression for the ``virtual structure sheaf'' of
$\coprod_{\gamma}\compmodsp[g]{n}(Z,\gamma)$, understanding along the
way the appearance of the Euler class of the bundle. In the remainder
of this introduction, we shall write
$\mathbb{R}u\colon\coprod_{\gamma}\mathbb{R}\compmodsp[g]{n}(Z,\gamma)
\hookrightarrow\mathbb{R}\compmodsp[g]{n}(X,\beta)$ the canonical
closed immersion (beware that $\mathbb{R}u$ is not a right derived
functor, but simply a morphism of derived stacks which is a thickening
of $u$).

\begin{supthm}[Categorified quantum Lefschetz principle,
  see~\cref{corlr:main-result} and~\cref{pros:zerosec-struct-shf}]
  \label{thm:main-thm}
  Let $X$ be a derived scheme,
  $\shname{E}\in\cat{Perf}^{\geq0}(\shname{O}_{X})$ a co-connective
  perfect module, and $s$ a section of $\mathbb{V}_{X}(\shname{E})$
  with zero locus
  $Z=X\times_{\mathbb{V}_{X}(\shname{E})}^{\mathbb{R}}X$. Write
  $\func{s}\colon\mathbb{E}^{\vee}
  \coloneqq(\mathbb{R}\func{p}_{\ast}\ev^{\ast}\shname{E})^{\vee}
  \to\shname{O}_{\compmodsp[g]{n}(X,\beta)}$ the cosection (of
  modules) corresponding to
  $\mathbb{R}\func{p}_{\ast}\ev^{\ast}s$. There is an equivalence
  \begin{equation}
    \label{eq:quantum-lefschetz-categorified-main}
    (\mathbb{R}u)_{\ast}\shname{O}_{\coprod_{\gamma}\mathbb{R}\compmodsp[g]{n}(Z,\gamma)}
    \simeq\shname{O}_{\mathbb{R}\compmodsp[g]{n}(X,\beta)}
    \otimes\shname{Sym}\left(\cofib(\func{s})\right)/(t-1)
    =\shname{Sym}\left(\cofib(\func{s})\right)/(t-1)%\text{,}
  \end{equation}
  in $\cat{QCoh}\left(\mathbb{R}\compmodsp[g]{n}(X,\beta)\right)$,
  where $\cofib(\func{s})$ denotes the cofibre (or homotopy cokernel)
  of the linear morphism $\func{s}$ and where
  $\shname{Sym}\left(\cofib(\func{s})\right)$ canonically admits an
  $\shname{O}_{\mathbb{R}\compmodsp[g]{n}(X,\beta)}\{t\}$-algebra
  structure.
\end{supthm}

We first notice that, in this categorified statement and unlike in the 
$G$-theoretic one, the Euler class of $\mathbb{E}^{\vee}$ is refined
to one taking into account the section $s$. Nonetheless this is indeed
a categorification of~\cref{thm:state-art-qlefschetz}, as we will
explain in~\cref{corlr:zero-sec-k-thry}
and~\cref{sec:recov-quant-lefsch-kthry}. When $s$ is the zero section,
meaning that $\func{s}$ is the zero morphism, then
$\shname{Sym}(\cofib(\func{s}))
=\shname{Sym}(\mathbb{E}^{\vee}[1])\otimes\shname{O}_{\mathbb{A}^{1}}$,
with
$\shname{Sym}(\mathbb{E}^{\vee}[1])
=\bigwedge^{\bullet}(\mathbb{E}^{\vee})$ so that in that case we do
recover a categorified Euler class. In particular, passing to the
$G_{0}$ groups will indeed provide an identification of the cofibres
of any and all sections, and hence give back~\cref{eq:joshua}; this
is~\cref{corlr:get-back-qlefshtz-gthry}.

The~\namecref{thm:main-thm} will in fact come as a corollary of a
geometric statement, as a translation of the fact that Euler classes
(also known, in the categorified setting, as Koszul complexes)
represent zero loci of sections. Indeed, we will show that the moduli
stack
$\coprod_{\gamma}\mathbb{R}\compmodsp[g]{n}(Z,\gamma)
=\shname{Spec}_{\mathbb{R}\compmodsp[g]{n}(X,\beta)}\left((\mathbb{R}u)_{\ast}
  \shname{O}_{\coprod_{\gamma}\mathbb{R}\compmodsp[g]{n}(Z,\gamma)}\right)$
satisfies the universal property of the zero locus of
$\mathbb{R}\func{p}_{\ast}\ev^{\ast}s$, meaning that
(per~\cref{corlr:main-result}, the \emph{geometric quantum Lefschetz
  principle}) it features in the cartesian square
\begin{equation}
  \label{eq:geom-statmnt-cart-diagr}
  \begin{tikzcd}
    \coprod\limits_{i_{\ast}\gamma=\beta}\mathbb{R}\compmodsp[g]{n}(Z,\gamma)
    \arrow[d,"\mathbb{R}u_{2}"'] \arrow[r,"\mathbb{R}u_{1}"]
    \arrow[dr,phantom,very near start,"\lrcorner"] &
    \mathbb{R}\compmodsp[g]{n}(X,\beta)
    \arrow[d,"\mathbb{R}\func{p}_{\ast}\ev^{\ast}s"] \\
    \mathbb{R}\compmodsp[g]{n}(X,\beta) \arrow[r,"0_{\mathbb{E}}"'] &
    \restr{\mathbb{E}}_{\mathbb{R}\compmodsp[g]{n}(X,\beta)}
  \end{tikzcd}\text{.}
\end{equation}
The formula~\cref{eq:quantum-lefschetz-categorified-main} for its
relative function ring will then be a consequence of the general
result~\cref{pros:zerosec-struct-shf} describing zero loci of sections
of vector bundles.

\begin{supremark}
  While we have written this introduction with the assumption that the
  target $X$ is a scheme for simplicity, the geometric and
  categorified quantum Lefschetz principles are not only valid for
  (derived) schematic targets, but also for orbifold Gromov--Witten
  theory, as foreshadowed by~\cite[Proposition
  5.1]{coates12:_lefsc}. In fact $X$ and $Z$ can be allowed to be
  derived algebraic stacks, and
  $\mathbb{R}\compmodsp[g]{n}(X,\beta)\subset\rmap_{/\modstk[g]{n}^{\mathrm{tw}}}(
  \stkcurv[g]{n}^{\mathrm{tw}},X\times\modstk[g]{n}^{\mathrm{tw}})$
  (where $\stkcurv[g]{n}^{\mathrm{tw}}\to\modstk[g]{n}^{\mathrm{tw}}$
  denotes the universal twisted curve) can be any open substack
  corresponding to a quasimap stability condition, as used for example
  in~\cite{chen19:_virtual} and detailed
  in~\cite[\S{}4.2.1.1]{kern21:_categ_delig}.
\end{supremark}

The original proof of the quantum Lefschetz principle
in~\cite{kim03:_funct_cox_katz_lee} also consisted of applying an
excess intersection formula to a geometric (or homological) statement,
here the fact that the embedding $u$ satisfies the compatibility
condition implying that Gysin pullback along it preserves the virtual
class. The situation was shed light upon
in~\cite{manolache12:_virtual}, where it was shown that, using
relative perfect obstruction theories (POTs), one can construct
\emph{virtual pullbacks}, which always preserve virtual classes. The
embedding $u$ being regular, its own cotangent complex can be used as
a POT to construct a virtual pullback, which evidently coincides with
the Gysin pullback.

Here we will show (in~\cref{sec:funct-inter-theory}) that, much in the
same way as for the virtual classes, the virtual pullbacks may be
understood as coming from derived geometric pullbacks of coherent
sheaves, so that our statement for the embedding of derived moduli
stacks does imply the quantum Lefschetz formula for the virtual
classes (and in fact its standard proof), with the classical convexity
hypotheses now appearing as necessary to make decategorification
possible.

\subsection{Acknowledgements}
\label{sec:acknowledgements}

These results were obtained as part of my PhD thesis written at the
Université d'Angers, and I wish to thank first my advisor Étienne Mann
for suggesting this problem and for numerous discussions. Many thanks
are also due to my co-advisor Cristina Manolache for helping me
understand the construction of virtual pullbacks, to Benjamin Hennion
for explaining to me over which base the proof
of~\cref{pros:perfcone-equiv-rmap} was to take place, and to Massimo
Pippi for explaining the subtleties of coherence and boundedness for
the passage to $K$-theory. I also thank Marc Levine who suggested
that~\cref{rmrk:genal-hmlgy-thries} was possible. I thank the
anonymous referee who suggested many corrections and improvements and
in particular pointed out an error in the statement
of~\cref{pros:zerosec-struct-shf}, and Bertrand Toën who explained a
correction.

\subsection{Notations and conventions}
\label{sec:notat-conv}

We will use freely the language of $(\infty,1)$-categories (referred
to as $\infty$-categories), developed in a model-independent manner
in~\cite{riehl21:_elemen}, and of derived algebraic geometry, as
developed for example in~\cite{toen08:_homot}
and~\cite{lurie19:_spect_algeb_geomet}. The $\infty$-category of
$\infty$-groupoids, also known as that of spaces
in~\cite{lurie09:_higher}, will be denoted $\cat{\infty\text{-}Grpd}$, and
similarly the $\infty$-category of $\infty$-categories is
$\cat{\infty\text{-}Cat}$.

We work over a fixed field $\Bbbk$ of characteristic $0$; hence the
$\infty$-category of $\Bbbk$-module spectra can be modelled as the
localisation of the category of $\Bbbk$-dg-modules along
quasi-isomorphisms, in a way compatible with the monoidal structures
so that connective $\Bbbk$-$\mathcal{E}_{\infty}$-algebras are modelled by
$\Bbbk$-cdgas concentrated in non-positive cohomological degrees. The
$\infty$-category of derived stacks on the big étale $\infty$-site of
$\Bbbk$ will simply be denoted $\cat{dSt}_{\Bbbk}$.

\begin{supremark}
  The geometric and categorified part of our result, that
  is~\cref{sec:zero-loci-vect-bundl}
  (except~\cref{rmrk:excess-inter-lie-envlop} and beyond)
  and~\cref{sec:geom-lefsch-princ} are valid when $\Bbbk$ is any
  $\mathcal{E}_{\infty}$-algebra over the sphere spectrum
  $\mathbb{S}$. However the formation of free spectral algebras does
  not have good finiteness properties, so in order to have bounded
  structure sheaves defining $G$-theory classes we do need to work
  over $\mathbb{Q}$ where the free spectral algebras coincide
  (by~\cite[Proposition 25.2.6.1]{lurie19:_spect_algeb_geomet}) with
  the polynomial construction.
\end{supremark}

We implicitly embed stacks into derived stacks; as such all
construction are derived by default. In particular the symbol $\times$
will refer to the (homotopical) fibre product of derived stacks; the
truncated (\emph{i.e.} strict, or underived) fibre product of
classical stacks will be denoted $\times^{\func{t}}$, that is
$X\times_{Y}^{\func{t}}Z=\truncat(X\times_{Y}Z)$ for $X$, $Y$ and $Z$
classical.

We shall always use cohomological indexing. By a dg-category (over
$\Bbbk$) we will mean a $\Bbbk$-linear stable $\infty$-category. For
any derived stack $X$, one defines its $G_{0}$-theory group $G_{0}(X)$
as the zeroth homotopy group of the $K$-theory spectrum of the
dg-category $\cat{Coh}^{\mathrm{b}}(X)$.

\section{Zero loci of sections of derived vector bundles}
\label{sec:zero-loci-vect-bundl}

\setcounter{subsection}{-1}

\subsection{A spicilege of derived geometry for Gromov--Witten
  theory}
\label{sec:spic-deriv-geom}

The main import of derived geometry in Gromov--Witten theory is to
make the homological objects which appear to correct defaults of
smoothness more natural (and geometric) by incorporating them from the
start as the basic blocks of the theory. Since the complexes
intervening can only be considered up to quasi-isomorphism, this
amounts in essence to replacing the category of $\Bbbk$-modules with
the \emph{derived} category of such as the place in which to define
$\Bbbk$-algebras. Furthermore, in odrer to work properly with
morphisms between derived $\Bbbk$-algebras, it is necessary to take
the derived category not just as a homotopy category, but as a full
$\infty$-categorical localisation.

In this note, working over a base field $\Bbbk$ containing
$\mathbb{Q}$, we will take the view that the the chain complexes of
(classical) $\Bbbk$-modules, seen as objects of the derived
$\infty$-category, are nothing more than models presenting a more
intrinsic notion of ``derived'' (sometimes also called ``animated'')
$\Bbbk$-modules\footnote{Defined more formally as modules over the
  Eilenberg--MacLane spectrum of $\Bbbk$ in the $\infty$-category of
  spectra.}. In other words, rather than constructing
$\infty$-categorical objects from classical ones, we will take the
$\infty$-categorical language as the more primitive one.  As such, for
any (possibly derived) $\Bbbk$-algebra $A$, we will simply call
\textbf{$A$-modules} the objects of the derived $(\infty,1)$-category
of $A$-modules, which in Gromov--Witten theory are usually rather seen
as \emph{complexes} of truncated $A$-modules. Our only exception to
this terminology, for historical reasons as for example
in~\cite{lurie17:_higher_algeb}, will be for the following important
example:

\begin{exmp}[Cotangent complex]
  If $A$ is a truncated $\Bbbk$-algebra and $A\to B$ is an $A$-algebra
  which is truncated as well, its cotangent complex $\mathbb{L}_{B/A}$
  can be seen as enhancing the cotangent module $\Omega^{1}_{B/A}$
  with homological corrections\footnote{In practice, it can be
    constructed, as a left-derived functor of $\Omega^{1}_{-/A}$, by
    taking a semi-free resolution of $B$ and applying
    $\Omega^{1}_{-/A}$ degreewise.} carrying deformation-theoretic
  information; this is the role it plays in the construction of
  virtual classes. Returning now to the case where $A$ is any general
  (\emph{i.e.} derived) $\Bbbk$-algebra, the \textbf{cotangent
    complex} of an $A$-algebra $A\to B$, denoted $\mathbb{L}_{B/A}$,
  can be characterised as representing ($\infty$-categorical)
  $A$-derivations from $B$, so now plays in higher algebra the exact
  same role that the cotangent module plays in classical algebra.
\end{exmp}

The ideas sketched above provide the notion of affine derived
$\Bbbk$-schemes, as the objects of the opposite $\infty$-category to
that of derived $\Bbbk$-algebras. Since our interest is in enumerative
geometry, we shall use the definition of derived $\Bbbk$-stacks as
moduli problems, \emph{i.e.} given by their $\infty$-functors of
points, $\infty$-functors
$\opcat{Aff}_{\Bbbk}=\cat{Alg}_{\Bbbk}\to\cat{\infty\text{-}Grpd}$
satisfying descent conditions for the étale topology on
$\cat{Aff}_{\Bbbk}$.

\begin{exmp}[Quasicoherent modules]
  The assignment to $\spec{A}\in\opcat{Aff}_{\Bbbk}$ of (the maximal
  $\infty$-groupoid of) the $\infty$-category $\cat{QCoh}(\spec{A})$
  of $A$-modules defines by~\cite[Theorem 1.3.7.2]{toen08:_homot} a
  derived stack (of $\infty$-categories) denoted $\cat{QCoh}$, whose
  groupoidal core is viewed as the classifying stack for quasicoherent
  modules. Then, for any derived stack $X$, the $\infty$-category of
  quasicoherent sheaves on $X$ is
  \begin{equation}
    \label{eq:def-qcoh-stck}
    \cat{QCoh}(X)=\hom(X,\cat{QCoh})
    \simeq\varprojlim_{\spec{A}\to X}\cat{QCoh}(\spec{A})\text{.}
  \end{equation}
  That is, a quasicoherent $\shname{O}_{X}$-module $\shname{M}$ is
  given by an $A$-module $\shname{M}_{x}$ for every
  $x\colon\spec{A}\to X$, and base-change isomorphisms
  $f^{\ast}\shname{M}_{x^{\prime}}\xrightarrow{\simeq}\shname{M}_{x}$
  for every morphism $f\colon\spec{A}\to\spec{A^{\prime}}$ of
  $X$-schemes (along with higher compatibilities).
\end{exmp}

Every derived stack $X$ has its \textbf{truncation} $\truncat{X}$, a
classical (higher) stack obtained by restricting the functor of points
$X$ along the inclusion of truncated (or classical) algebras in all
(derived) algebras. The truncation $\infty$-functor
$\truncat\colon\cat{dSt}_{\Bbbk}\to\cat{St}_{\Bbbk}$ is right-adjoint
to an $\infty$-functor
$\func{i}\colon\cat{St}_{\Bbbk}\to\cat{dSt}_{\Bbbk}$ which is fully
faithful (providing an embedding of classical higher stacks into
derived stacks, by viewing them as trivially derived) and will be
omitted from notation.  This is in keeping with our principle of
implicitly embedding stacks into derived stacks; as such all
construction are derived by default. In particular the symbol $\times$
will refer to the fibre product of derived stacks (given on affines by
the ``derived'' tensor product of algebras); the truncated
(\emph{i.e.} strict, or underived) fibre product of classical stacks
will be denoted $\times^{\func{t}}$, that is
$X\times_{Y}^{\func{t}}Z=\truncat(X\times_{Y}Z)$ for $X$, $Y$ and $Z$
classical.

The counit of the adjunction $\func{i}\dashv\truncat$ will be denoted
$\jmath$; its components
$\jmath_{X}\colon\truncat{X}\hookrightarrow X$ are closed immersions
and will play an important role in the construction of virtual
pullbacks in~\cref{sec:defin-from-deriv}.

In~\cref{sec:revi-deriv-moduli}, we will recall in more details the
relevance of derived grometry, and in particular the role played by
the cotangent complex, in Gromov--Witten theory.

\subsection{Vector bundles in derived geometry}
\label{sec:vect-bundl-deriv}

\begin{definition}[Total space of a quasicoherent module]
  \label{def:vect-bundl}
  Let $X$ be a derived Artin $\Bbbk$-stack. For any quasicoherent
  $\shname{O}_{X}$-module $\shname{M}$, the linear derived stack
  $\mathbb{V}_{X}(\shname{M})$ is described by the $\infty$-functor of
  points mapping an $X$-derived stack $\phi\colon T\to X$ to the
  $\infty$-groupoid
  \begin{equation}
    \label{eq:def-vb-value}
    \map_{\cat{QCoh}(T)}(\shname{O}_{T},\phi^{\ast}\shname{M})\text{.}
  \end{equation}

  We call \textbf{abelian cone} over $X$ any $X$-stack equivalent to
  the total space $\mathbb{V}_{X}(\shname{M})$ of a quasicoherent
  $\shname{O}_{X}$-module $\shname{M}$. We shall say that
  $\mathbb{V}_{X}(\shname{M})$ is a \textbf{perfect} cone if
  $\shname{M}$ is perfect (equivalently, dualisable), and a
  \textbf{vector bundle} if $\shname{M}$ is locally free of finite
  rank (as defined in~\cite[Notation
  2.9.3.1]{lurie19:_spect_algeb_geomet}).
\end{definition}

\begin{remark}
  \label{remark:loc-free-vbund-amplitude}
  If $\shname{M}$ is a locally free $\shname{O}_{X}$-module,
  by~\cite[Proposition 2.9.2.3]{lurie19:_spect_algeb_geomet} we may
  take a Zariski open cover $\coprod_{i}U_{i}\to X$ with
  $\restr{\shname{M}}_{U_{i}}$ free of rank $r_{i}$. We deduce from
  this (or from~\cite[Remark 7.2.4.22]{lurie17:_higher_algeb}
  and~\cite[Remark 2.9.1.2]{lurie19:_spect_algeb_geomet}) that any
  locally free module has Tor-amplitude concentrated in degree $0$,
  and it will follow from~\cref{pros:cotgt-cplx-vect-bndl} that any
  vector bundle is smooth over its base.
\end{remark}

\begin{remark}
  \label{rmrk:perfect-cone-specsym}
  If $\shname{M}$ is dualisable, with dual $\shname{M}^{\vee}$, then
  as pullbacks commute with taking duals we have for any
  $\phi\colon T\to X$
  \begin{equation}
    \label{eq:perf-cone-specsym}
    \begin{split}
      \mathbb{V}_{X}(\shname{M})(\phi)
      &=\map_{\cat{QCoh}(T)}(\phi^{\ast}\shname{M}^{\vee},\shname{O}_{T})\\
      &=\map_{\cat{Alg}(\shname{O}_{X})}(
      \shname{Sym}_{\shname{O}_{X}}(\shname{M}^{\vee})
      ,\phi_{\ast}\shname{O}_{T})
      =\shname{Spec}_{X}^{\text{nc}}(\shname{Sym}_{\shname{O}_{X}}(
      \shname{M}^{\vee}))(\phi)
    \end{split}
  \end{equation}
  where $\shname{Spec}_{X}^{\text{nc}}$ denotes the non-connective
  relative spectrum $\infty$-functor. Hence the restriction of
  $\mathbb{V}_{X}$ to $\cat{Perf}(\shname{O}_{X})$ is naturally
  equivalent to the composite
  $\shname{Spec}_{X}^{\text{nc}}
  \circ\shname{Sym}_{\shname{O}_{X}}\circ(-)^{\vee}$. In particular,
  if $\shname{M}$ is a connective module then
  $\mathbb{V}_{X}(\shname{M})$ is a relatively coaffine stack, while
  if $\shname{M}$ is co-connective, so that
  $\shname{Sym}_{\shname{O}_{X}}(\shname{M}^{\vee})$ is a connective
  algebra, $\mathbb{V}_{X}(\shname{M})$ is an affine derived
  $X$-scheme.

  Note however that the $\infty$-functor
  $\shname{Spec}_{X}^{\text{nc}}$ only becomes fully faithful when
  restricted to % either
  connective $\shname{O}_{X}$-algebras (as this restriction is
  equivalent to the Yoneda embedding thereof) but not when acting on
  general $\shname{O}_{X}$-algebras in degrees of arbitrary positivity
  (see for example~\cite{monier21} for a counterexample, as well as
  details on the full-faithfulness of the $\infty$-functor
  $\mathbb{V}_{X}(-^{\vee})$).
\end{remark}

\begin{warning}[Terminology]
  \label{warn:termin-vb-dual}
  Note that our convention for derived perfect cones is dual to that
  used in (among others) \cite{toen14:_deriv} (and dating back to
  EGA2), which defines the total space of a quasicoherent
  $\shname{O}_{X}$-module $\shname{M}$ as the $X$-stack whose sheaf of
  sections is $\shname{M}^{\vee}$, \emph{i.e.} what we denote
  $\mathbb{V}_{X}(\shname{M}^{\vee})$.
\end{warning}

\begin{exmp}
  \label{exmp:derived-cones}
  \begin{enumerate}[label=\roman*.]
  \item If $X$ is a classical Deligne--Mumford stack and $\shname{M}$
    is of perfect amplitude in $[-1,0]$, the truncation
    $\truncat(\mathbb{V}_{X}(\shname{M}[1]^{\vee}))$ is the abelian cone
    Picard stack $\shname{H}^{1}/\shname{H}^{0}(\shname{M}^{\vee})$
    of~\cite[Proposition 2.4]{behrend97}.
  \item By~\cite[Proposition 1.4.1.6]{toen08:_homot},
    $\mathbb{V}_{X}(\mathbb{T}_{X})=TX\simeq\rmap(\Bbbk[\varepsilon],X)$
    is the tangent bundle stack of $X$. More generally, using
    $\Bbbk[\varepsilon_{n}]$ where $\varepsilon_{n}$ is of
    cohomological degree $-n$ (so of homotopical degree $n$) we have
    the shifted tangent bundle
    $T[-n]X\simeq\mathbb{V}_{X}(\mathbb{T}_{X}[-n])$. Dually, one also
    defines the shifted cotangent stack
    $T^{\vee}[n]X=\mathbb{V}_{X}(\mathbb{L}_{X}[n])$.
  \end{enumerate}
\end{exmp}

\begin{lemma}[{\cite[Sub-lemma 3.9]{toen07:_modul}},{\cite[Theorem
    5.2]{antieau14:_brauer}\footnote{The grading convention used
      in~\cite{antieau14:_brauer} is homotopical, in opposition to our
      cohomological convention.}}]
  \label{lem:ab-cones-geometricity}
  Suppose $\shname{M}$ is of perfect Tor-amplitude contained in
  $[a,b]$ (where $a,b\in\mathbb{Z}$). Then the derived stack
  $\mathbb{V}_{X}(\shname{M})$ is $(-a)$-geometric and strongly of
  finite presentation.
\end{lemma}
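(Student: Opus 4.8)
The plan is to deduce the statement from the description in \cref{rmrk:perfect-cone-specsym} together with standard geometricity criteria for relative spectra of connective and co-connective algebras, reducing to the case where $\shname{M}$ is connective (so has Tor-amplitude in $[a,0]$ with $a\le 0$) and then, by a shifting and gluing argument, to the locally free case. Since the claim is local on $X$ and the bounds $[a,b]$ are finite, I would first choose a Zariski cover trivialising a perfect complex representative of $\shname{M}$, so that on each chart $\shname{M}$ is a bounded complex of free modules; then I can write $\mathbb{V}_X(\shname{M})$ as an iterated fibre product of shifted vector bundles $\mathbb{V}_X(\shname{O}_X^{\oplus r}[j])$ with $a\le j\le b$, using additivity of $\mathbb{V}_X$ in $\shname{M}$ and the identification of $\mathbb{V}_X$ with $\shname{Spec}_X\circ\shname{Sym}\circ(-)^\vee$ on perfect modules.

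Next I would handle the building blocks. For $j=0$, $\mathbb{V}_X(\shname{O}_X^{\oplus r})=\mathbb{A}^r_X$ is a smooth affine $X$-scheme, hence $0$-geometric and strongly of finite presentation. For $j>0$ (the co-connective direction), $\shname{Sym}(\shname{O}_X^{\oplus r}[j]^\vee)=\shname{Sym}(\shname{O}_X^{\oplus r}[-j])$ is a co-connective $\shname{O}_X$-algebra of finite presentation, so its relative spectrum is an affine derived $X$-scheme of finite presentation, again $0$-geometric; this is the content of the $\shname{Spec}_X$ fully-faithfulness remark for co-connective algebras invoked in \cref{rmrk:perfect-cone-specsym}. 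For $j<0$ (the connective, stacky direction), $\mathbb{V}_X(\shname{O}_X[j])=B^{-j}\mathbb{G}_{\mathrm a}\times X$ is the $(-j)$-fold classifying stack of $\mathbb{G}_{\mathrm a}$ over $X$, which is $(-j)$-geometric and strongly of finite presentation by the usual inductive bar-construction argument (each $B\mathbb{G}_{\mathrm a}$ is $1$-geometric and lfp over the point). Assembling: $\mathbb{V}_X(\shname{M})$ over a chart is a finite limit of stacks that are at worst $(-a)$-geometric and strongly of finite presentation, and such a finite limit is again $(-a)$-geometric (a fibre product of $n$-geometric stacks over an $n$-geometric stack is $n$-geometric) and strongly of finite presentation; descent along the Zariski cover then gives the global statement. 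Alternatively, and perhaps more cleanly, one argues directly by the Artin--Lurie representability criterion: $\mathbb{V}_X(\shname{M})$ is infinitesimally cohesive, nilcomplete and admits a cotangent complex (which by \cref{pros:cotgt-cplx-vect-bndl}, or a direct computation, is $p^\ast\shname{M}$ for $p$ the projection), and its truncation is a classical Artin stack; combined with the finiteness of the Tor-amplitude this yields $(-a)$-geometricity.

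The main obstacle is bookkeeping the geometricity level precisely: one must check that the connective summands in degree $a$ (and no deeper) are exactly what forces $(-a)$-geometricity rather than something worse, and that intermediate fibre products do not accumulate geometricity. I expect to control this by noting that in the iterated-limit presentation the only non-affine factors are the $B^{-j}\mathbb{G}_{\mathrm a}$ with $a\le j\le -1$, the deepest being $B^{-a}\mathbb{G}_{\mathrm a}$, and that a pullback of an $m$-geometric stack along a map from an $n$-geometric stack to an $m$-geometric stack with $n\le m$ stays $m$-geometric; so the total is $\max(0,-a)=(-a)$-geometric when $a\le 0$, and just affine (hence $0$-geometric, and one can even take $a=0$ in the statement) when $\shname{M}$ is co-connective. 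Strong finite presentation is then the conjunction of local finite presentation — inherited from the finitely-generated-free building blocks and the finiteness of the complex — and quasi-compactness and quasi-separatedness, which also follow chart-by-chart. The cited references (\cite{toen07:_modul}, \cite{antieau14:_brauer}) carry out essentially this computation, with the caveat about the homotopical-versus-cohomological grading convention already flagged in the footnote, so the write-up can largely defer to them after setting up the reduction.
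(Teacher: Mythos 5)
The paper does not prove this \namecref{lem:ab-cones-geometricity}: it is imported verbatim from~\cite{toen07:_modul} and~\cite{antieau14:_brauer} (modulo the footnoted grading change), so there is no internal argument to compare against. Your sketch does follow the broad strategy of those references --- reduce Zariski-locally to a bounded complex of free modules, present $\mathbb{V}_X(\shname{M})$ as a finite limit of total spaces of shifted free modules, identify the building blocks, and invoke stability of $n$-geometricity under finite limits and descent --- and the suggested alternative via the Artin--Lurie representability criterion (using \cref{pros:cotgt-cplx-vect-bndl} for the cotangent complex) is also a legitimate route to the same conclusion.

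However, the middle paragraph has the connective/co-connective identifications backwards if $[j]$ denotes the cohomological shift. With the paper's conventions, $\shname{O}_X[j]$ for $j>0$ sits in cohomological degree $-j<0$, so it is the \emph{connective} (stacky) building block: $\mathbb{V}_X(\shname{O}_X[j])\simeq B^{j}\mathbb{G}_{\mathrm a}\times X$, while $\shname{Sym}(\shname{O}_X[-j])$ is \emph{co-connective} and its relative spectrum is precisely the coaffine stack $B^{j}\mathbb{G}_{\mathrm a}$, not an affine derived $X$-scheme. Conversely $\shname{O}_X[j]$ for $j<0$ is co-connective and $\mathbb{V}_X(\shname{O}_X[j])$ is affine (compare the parenthetical in \cref{rmrk:perfect-cone-specsym}: connective $\shname{M}$ $\Rightarrow$ coaffine total space, co-connective $\shname{M}$ $\Rightarrow$ affine total space). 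You recover the right answer at the end only because you silently switch to letting $j$ index the cohomological degree rather than the shift when you write ``the $B^{-j}\mathbb{G}_{\mathrm a}$ with $a\le j\le -1$, the deepest being $B^{-a}\mathbb{G}_{\mathrm a}$''; the write-up should fix one convention (say, $\shname{M}$ free in cohomological degree $j$, i.e.\ $\shname{O}_X^{\oplus r}[-j]$) and use it consistently, otherwise the bookkeeping that justifies the exponent $-a$ is not actually carried out. One further small point worth making explicit: the statement as given only makes literal sense when $a\le 0$; when $\shname{M}$ is co-connective ($a\ge 0$) the total space is an affine derived $X$-scheme, i.e.\ $(-1)$-geometric, and the formula $(-a)$-geometric should be read with a $\max$ or under the implicit assumption $a\le 0$.
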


\begin{construct}
  \label{constr:abcone-fctorl-pushfrwrd}
  For any derived stack $X$, the $\infty$-functor $\mathbb{V}_{X}$
  gives a link between two functorial (in $X$) constructions. On the
  one hand we have the $\infty$-functor
  $(-)_{\mathrm{ét}}\colon\cat{dSt}_{\Bbbk}\to\cat{\infty\text{-}Cat}$
  mapping a derived $\Bbbk$-stack $X$ to its étale $\infty$-topos
  $X_{\mathrm{ét}}$ and a map of derived stacks $f\colon X\to Y$ to
  the direct image $f_{\ast}$ of the induced geometric morphism,
  mapping a sheaf $\shname{F}$ on $\cat{dSt}_{\Bbbk,/X}$ to the sheaf
  $f_{\ast}\shname{F}\colon(U\to Y) \mapsto\shname{F}(U\times_{Y}X\to
  X)$.

  On the other hand, we have the $\infty$-functor $\cat{QCoh}(-)$
  mapping a derived $\Bbbk$-stack $X$ to the underlying
  $\infty$-category of the dg-category $\cat{QCoh}(X)$, and a map
  $f\colon X\to Y$ to $\shname{M}\mapsto f_{\ast}\shname{M}$ (where
  the direct image sheaf is considered an $\shname{O}_{Y}$-module
  through $f^{\sharp}\colon\shname{O}_{Y}\to
  f_{\ast}\shname{O}_{X}$). Then for any $\shname{M}\in\cat{QCoh}(X)$,
  we obtain the functor of points of its total space,
  $\mathbb{V}_{X}(\shname{M})$, which is an étale sheaf on
  $\cat{dSt}_{\Bbbk,/X}$.
\end{construct}

\begin{lemma}
  \label{lemma:ab-cone-nat}
  Let $\cat{dSt}_{\Bbbk}^{(\mathrm{f.coh.d.})}$ denote the wide and
  $2$-full sub-$\infty$-category whose $1$-arrows are the morphisms of
  finite cohomological dimension (see~\cite[Definition A.1.4, Lemma
  A.1.6]{halpern-leistner14:_mappin}). % QCA maps (whose
  % fibres are quasi-compact, with affine automorphism groups of
  % geometric points, and with classical inertia stacks of finite
  % presentation over their truncations, see~\cite[Definition
  % 1.1.8]{drinfeld13:_some_finit_quest_algeb_stack}).
  The
  $\infty$-functors
  $\mathbb{V}_{X}\colon\cat{QCoh}(X)\to X_{\mathrm{ét}}$ assemble into
  a natural transformation
  $\mathbb{V}\colon\cat{QCoh}(-) \Rightarrow(-)_{\mathrm{ét}}$ of
  $\infty$-functors
  $\cat{dSt}_{\Bbbk}^{(\mathrm{f.coh.d.})}\to\cat{\infty\text{-}Cat}$.
\end{lemma}

\begin{proof}
  We must construct, for any $f\colon X\to Y$ and any
  $\shname{M}\in\cat{QCoh}(X)$, an equivalence
  $f_{\ast}(\mathbb{V}_{X}(\shname{M}))
  =\mathbb{V}_{Y}(f_{\ast}\shname{M})$. For any $\phi\colon U\to Y$,
  the base change along $f$ will take place in the cartesian square
  \begin{equation}
    \label{eq:basechg-sqr-vb-nat}
    \begin{tikzcd}
      X\lmtimes_{Y}U \arrow[d,"f\lmtimes_{Y}U"']
      \arrow[r,"X\lmtimes_{Y}\phi"] \arrow[dr,phantom,very near
      start,"\lrcorner"] & X \arrow[d,"f"] \\
      U \arrow[r,"\phi"'] & Y
    \end{tikzcd}\text{.}
  \end{equation}

  Then we have
  $\mathbb{V}_{Y}(f_{\ast}\shname{M})(U)
  =\map_{\cat{QCoh}(U)}(\shname{O}_{U},
  \phi^{\ast}f_{\ast}\shname{M})$ while
  \begin{equation}
    \label{eq:basechg-vb-nat-comput}
    \begin{split}
      f_{\ast}(\mathbb{V}_{X}(\shname{M}))(U)
      &=\map_{\cat{QCoh}(X\times_{Y}U)}(
      \shname{O}_{X\times_{Y}U},(X\times_{Y}\phi)^{\ast}\shname{M})\\
      &\simeq\map_{\cat{QCoh}(X\times_{Y}U)}(
      (f\times_{Y}U)^{\ast}\shname{O}_{U},
      (X\times_{Y}\phi)^{\ast}\shname{M})\\
      &\simeq\map_{\cat{QCoh}(U)}(\shname{O}_{U},
      (f\times_{Y}U)_{\ast}(X\times_{Y}\phi)^{\ast}\shname{M})\text{.}
    \end{split}
  \end{equation}
  By the base-change property of~\cite[Proposition A.1.5
  (3)]{halpern-leistner14:_mappin} % of~\cite[Corollary 1.4.5
  % (i)]{drinfeld13:_some_finit_quest_algeb_stack} (since $f$ is QCA)
  the two coincide.

  Since the isomorphisms appearing in~\cref{eq:basechg-vb-nat-comput}
  and the base-change map are defined from adjunctions, they come
  equipped with functoriality property which furnish the higher
  naturality coherences.
\end{proof}

\begin{remark}
  \label{rmrk:qsm-preserves-perf}
  By~\cite[Theorem 2.1]{toen12:_proper} , if $f\colon X\to Y$ is
  quasi-smooth and proper then $f_{\ast}$ sends perfect
  $\shname{O}_{X}$-modules to perfect $\shname{O}_{Y}$-modules.
\end{remark}

Finally, we shall use the following well-known description of the
cotangent complex of a perfect cone.

\begin{pros}[{\cite[Theorem 5.2]{antieau14:_brauer}}]
  \label{pros:cotgt-cplx-vect-bndl}
  Let $\shname{M}$ be a perfect $\shname{O}_{X}$-module, and write
  $\pi\colon\mathbb{V}_{X}(\shname{M})\to X$ the structure
  morphism. Then
  $\mathbb{L}_{\pi\colon\mathbb{V}_{X}(\shname{M})/X}
  \simeq\pi^{\ast}\shname{M}^{\vee}$.
\end{pros}

\begin{proof}
  The equivalence is established fibrewise in~\cite[Proposition
  7.4.3.14]{lurie17:_higher_algeb}.
\end{proof}

\subsection{Excess intersection formula}
\label{sec:excess-inters-form}

In this~\namecref{sec:excess-inters-form}, we work with a derived
stack $M$ and the closed embedding $u\colon T\hookrightarrow M$ of
derived stacks defined as the zero locus of a section
$s=\shname{Spec}_{M}s^{\sharp}$ of a (relatively affine) perfect cone
$\shname{Spec}_{M}\shname{Sym}_{\shname{O}_{M}}(\shname{F}^{\vee})$ on
$M$: we fix a co-connective (for the relative affineness) perfect
$\shname{O}_{M}$-module $\shname{F}$ and a morphism of
$\shname{O}_{M}$-algebras
$s^{\sharp}\colon\shname{Sym}_{\shname{O}_{M}}(\shname{F}^{\vee})
\to\shname{O}_{M}$, corresponding (by the left-adjoint property of
$\shname{Sym}_{\shname{O}_{M}}$) to the cosection
$\widetilde{s}\colon\shname{F}^{\vee}\to\shname{O}_{M}$ of the module
$\shname{F}^{\vee}$.

\begin{remark}[Notation, derived versus spectral symmetric powers]
  In spectral algebraic geometry, over an $\mathcal{E}_{\infty}$-ring
  spectrum $\shname{O}$, the construction of polynomial algebras,
  usually denoted $\shname{O}[t_{1},\dots,t_{m}]$, differs from that
  of free symmetric algebras, denoted
  $\shname{O}\{t_{1},\dots,t_{m}\}
  =\shname{Sym}_{\shname{O}}(\shname{O}^{\oplus m})$. Working as we do
  in characteristic zero, the difference between the two vanishes;
  however, as we wish to emphasise that the left-adjoint property of
  the symmetric algebra $\infty$-functor is the one that matters for
  us, making the main result of this~\namecref{sec:excess-inters-form}
  valid over not just over our base $\Bbbk$ but over a general ring
  spectrum, we shall use the spectral notation. In particular, the
  affine line over $M$ is
  $\mathbb{A}^{1}_{M} =\shname{Spec}_{M}(\shname{O}_{M}\{t\})
  =\shname{Spec}_{M}\shname{Sym}_{\shname{O}_{M}}\shname{O}_{M}^{\oplus1}$.
\end{remark}

\begin{pros}
  \label{pros:zerosec-struct-shf}
  The derived $M$-stack $T$ may be recovered as the fibre
  \begin{equation}
    \label{eq:zerosec-stck-formula}
    T\simeq\mathbb{V}_{M}(\cofib(\widetilde{s})^{\vee})
    \lmtimes_{\mathbb{A}^{1}_{M}}\{1\}_{M}
  \end{equation}
  for a certain structure of stack over
  $\mathbb{A}^{1}_{M} =\shname{Spec}_{M}(\shname{O}_{M}\{t\})$ on
  $\mathbb{V}_{M}(\cofib(\widetilde{s})^{\vee})$, that is $T$ is the
  relative spectrum of the quotient $\shname{O}_{M}$-algebra
  \begin{equation}
    \label{eq:zerosec-struct-shf-formula}
    u_{\ast}\shname{O}_{T}
    =\shname{Sym}_{\shname{O}_{M}}\bigl(
    \cofib(\widetilde{s})\bigr)/(t-1)\text{,}
  \end{equation}
  where the structure map
  $\epsilon_{\shname{O}_{M}}\colon\shname{O}_{M}\{t\}\to\shname{O}_{M}$
  is the quotient arrow
  $\shname{O}_{M}\{t\}\to\shname{O}_{M}\{t\}/(t-1)\simeq\shname{O}_{M}$
  mapping $t$ to $1$ (\emph{i.e.} corresponding to the identity
  morphism of $\shname{O}_{M}$-modules
  $\id_{\shname{O}_{M}}\colon\shname{O}_{M}\to\shname{O}_{M}$).
  
  More generally, the monad $u_{\ast}u^{\ast}$ on $\cat{QCoh}(M)$
  identifies with tensoring by the algebra
  $\shname{Sym}_{\shname{O}_{M}}(\cofib(\widetilde{s}))/(t-1)$.
\end{pros}

\begin{proof}
  From the canonical fibre sequence
  $\shname{F}^{\vee}\xrightarrow{\widetilde{s}}\shname{O}_{M}
  \to\cofib(\widetilde{s})$ we obtain, by application of the
  $(\infty,1)$-functor $\shname{Sym}_{\shname{O}_{M}}$, an
  $\shname{O}_{M}\{t\}$-algebra structure
  $\shname{O}_{M}\{t\}\coloneqq\shname{Sym}_{\shname{O}_{M}}(\shname{O}_{M})
  \to\shname{Sym}_{\shname{O}_{M}}(\cofib(\widetilde{s}))$.  As
  $\shname{Sym}_{\shname{O}_{M}}$ is a left-adjoint it preserves
  colimits (by~\cite[Theorem 2.4.2]{riehl21:_elemen}) whence the
  latter term, image by $\shname{Sym}_{\shname{O}_{M}}$ of the
  $\shname{O}_{M}$-module
  $0\oplus_{\shname{F}^{\vee}}\shname{O}_{M}^{\oplus1}
  \eqqcolon\cofib(\widetilde{s})$,
  is the pushout of algebras (so by~\cite[Proposition
  3.2.4.7]{lurie17:_higher_algeb} the tensor product)
  $\shname{O}_{M}\otimes_{\shname{Sym}_{\shname{O}_{M}}(\shname{F}^{\vee})}
  \shname{O}_{M}\{t\}$.

  By definition, the algebra
  $\shname{Sym}_{\shname{O}_{M}}(\cofib(\widetilde{s}))
  \otimes_{\shname{O}_{M}\{t\}}\shname{O}_{M}$ under consideration
  fits in the left pushout square in the diagram
  \begin{equation}
    \label{eq:16}
    \begin{tikzcd}
      \shname{Sym}_{\shname{O}_{M}}(\cofib(\widetilde{s}))/(t-1)
      \arrow[from=d] \arrow[from=r] \arrow[from=dr,phantom,very near
      end,"\lrcorner"] &
      \shname{Sym}_{\shname{O}_{M}}(\cofib(\widetilde{s}))
      \arrow[from=d] \arrow[from=r] \arrow[from=dr,phantom,very near
      end,"\lrcorner"] &
      \shname{O}_{M} \arrow[from=d] \\
      \shname{O}_{M} \arrow[from=r,"\epsilon_{\shname{O}_{M}}"] &
      \shname{O}_{M}\{t\} \arrow[from=r,"\shname{Sym}(\tilde{s})"] &
      \shname{Sym}_{\shname{O}_{M}}(\shname{F}^{\vee})\text{.}
    \end{tikzcd}
  \end{equation}
  From the previous discussion the right square is also cocartesian,
  so that the bigger diagram is also a pushout square.  We now observe
  that the lower composite identifies with $s^{\sharp}$ (since the map
  $\epsilon_{\shname{O}_{M}}$ is the counit of the adjunction
  $\shname{Sym}_{\shname{O}_{M}}\dashv\func{frgt}$), so that the big
  pushout square computes the function $\shname{O}_{M}$-algebra of the
  zero locus of $s$.

  Finally, both $u_{\ast}$ and $u^{\ast}$ are left-adjoints, so by the
  homotopical Eilenberg--Watts theorem
  of~\cite{hovey15:_brown_eilen_watts} (see also \cite[Chapter 4,
  Corollary 3.3.5]{gaitsgory17}) their composite $u_{\ast}u^{\ast}$ is
  equivalent to tensoring by $u_{\ast}u^{\ast}\shname{O}_{M}$. This
  can also be seen as a projection formula (proved for example
  in~\cite[Remark 3.4.2.6]{lurie19:_spect_algeb_geomet}) between
  $u_{\ast}(u^{\ast}-\otimes\shname{O}_{T})$ and
  $-\otimes u_{\ast}\shname{O}_{T}$, or indeed, more tautologically,
  as the definition of the direct and inverse image functors from the
  point of view of derived stacks as ringed $\infty$-topoi (from which
  the further identification of the monad structures follows readily).
\end{proof}

\begin{subremark}[Geometric interpretation]    
  Let
  $\overline{s}\colon\mathbb{A}^{1}_{M}\to\mathbb{V}_{M}(\shname{F})$
  be the linearisation of $s$, obtained as the image of
  $\widetilde{s}$ by $\mathbb{V}_{M}$. The zero locus of
  $\overline{s}$ is
  $\restr{\mathbb{A}^{1}_{M}}_{T}\cup\mathbb{A}^{0}_{M\setminus T}$,
  so taking the fibre at any non-zero element $\lambda$ of
  $\mathbb{A}^{1}_{M}$ recovers
  $T\times\{\lambda\}\cup\emptyset\simeq T$.
\end{subremark}

\begin{subexmp}[Koszul complexes]
  \label{rmrk:koszul-cplx}
  Suppose $\shname{F}$ is locally free. Then, passing to a Zariski
  open cover $\coprod U_{i}\to M$, we may assume as
  in~\cref{remark:loc-free-vbund-amplitude} that
  $\restr{\shname{F}}_{U_{i}}$ is free of rank $r_{i}$. Write
  $\restr{\widetilde{s}}_{U_{i}}=(s_{\ell})_{1\leq\ell\leq r}$ in
  coordinates. Then we recover the Koszul complex
  $\bigotimes_{\ell=1}^{r}\cofib(s_{\ell})$, as studied for instance
  in~\cite[\S{}2.3.1]{khan19:_virtual_cartier}
  or~\cite{vezzosi11:_deriv_i_basic}.
\end{subexmp}

Recall that the exterior algebra of the quasicoherent
$\shname{O}_{M}$-module $\shname{F}^{\vee}$ is
$\bigwedge^{\bullet}\shname{F}^{\vee}
\coloneqq\shname{Sym}_{\shname{O}_{M}}(\shname{F}^{\vee}[1])
=\bigoplus_{n\geq0}(\bigwedge^{n}\shname{F}^{\vee})[n]$.

\begin{corlr}[Excess intersection formula]
  \label{corlr:excess-intersct}
  For any quasicoherent $\shname{O}_{T}$-module $\shname{M}$ that is
  the restriction (along $u^{\ast}$) of an $\shname{O}_{M}$-module,
  there is an equivalence
  \begin{equation}
    \label{eq:excess-inter-categ}
    \begin{split}
      u^{\ast}u_{\ast}\shname{M}
      &=\shname{M}\otimes_{\shname{O}_{T}}
      \bigwedge\nolimits^{\bullet}\restr{\shname{F}^{\vee}}_{T}\text{.}
    \end{split}
  \end{equation}
\end{corlr}

\begin{proof}
  The $\infty$-functor $u^{\ast}$ is a left-adjoint so it preserves
  colimits, among which in particular cofibres. By definition, we are
  given an equivalence $u^{\ast}\widetilde{s}\simeq u^{\ast}0=0$, so
  the image by $u^{\ast}$ of~\cref{eq:zerosec-struct-shf-formula}
  takes the form $\shname{Sym}(\cofib0)/(t-1)$. By definition of the
  zero morphism, we may decompose this pushout as the composite of two
  amalgamated sums:
  \begin{equation}
    \label{eq:cofib-zero-nlin-diagr}
    \begin{tikzcd}
      \shname{F}^{\vee}[1]\oplus\shname{O}_{M} \arrow[dr,phantom,very
      near start,"\lrcorner"] & \shname{F}^{\vee}[1] \arrow[l]
      \arrow[dr,phantom,very near start,"\lrcorner"] &
      0 \arrow[l,"!"'] \\
      \shname{O}_{M} \arrow[u] & 0 \arrow[l,"!"'] \arrow[u,"!"] &
      \shname{F}^{\vee} \arrow[l,"!"']  \arrow[u,"!"'] \arrow[ll,bend
      left,"0"]
    \end{tikzcd}\text{,}
  \end{equation}
  so that
  $\shname{Sym}_{\shname{O}_{M}}(\cofib{0})
  =\shname{Sym}_{\shname{O}_{M}}(\shname{F}^{\vee}[1]\oplus\shname{O}_{M})
  =\shname{Sym}(\shname{F}[-1]^{\vee})
  \otimes_{\shname{O}_{M}}\shname{O}_{M}\{t\}$. As $u^{\ast}$ has a
  structure of monoidal $\infty$-functor, this extends to any
  $\shname{O}_{T}$-module $\shname{M}$ in the image of $u^{\ast}$.

  Of course, this can also be obtained more directly from the fact
  that, when $s$ is restricted to zero, the leftmost diagram below is
  the image by $\shname{Spec}_{M}\shname{Sym}_{\shname{O}_{M}}$ of the
  rightmost one:
  \begin{equation}
    \label{eq:zero-self-inter-cofib-zero}
    \begin{tikzcd}
      T \arrow[d,"u"'] \arrow[r,"u"] \arrow[dr,phantom,very near
      start,"\lrcorner"] & M \arrow[d,"0_{\mathbb{V}_{M}(\shname{F})}"] \\
      M \arrow[r,"0_{\mathbb{V}_{M}(\shname{F})}"'] &
      \mathbb{V}_{M}(\shname{F})
    \end{tikzcd}\qquad
    \begin{tikzcd}
      u_{\ast}\shname{O}_{T} \arrow[dr,phantom,very near
      start,"\lrcorner"] & 0 \arrow[l] \\
      0 \arrow[u] & \shname{F}^{\vee} \arrow[l,"!"] \arrow[u,"!"']
    \end{tikzcd}\text{.}
  \end{equation}
\end{proof}

\begin{remark}[Lie-theoretic interpretation]
  \label{rmrk:excess-inter-lie-envlop}
  The excess intersection formula can also be seen as coming from the
  study of the $\mathcal{L}_{\infty}$-algebroid associated with the
  closed embedding $u$. Indeed, we are studying the geometry of a
  closed sub-derived stack $T\subset M$, which can be understood
  through that of its formal neighbourhood
  $\widehat{M}_{T}=M\times_{M_{\mathrm{dR}}}T_{\mathrm{dR}}$. This is
  a formally algebraic derived stack (see \cite[section
  4.1]{calaque18:_formal} or \cite[Chapter 1, Definition
  7.1.2]{gaitsgory17b} for details) which is a formal thickening of
  $T$. By \cite[Chapter 5, Theorem 2.3.2]{gaitsgory17b}, the
  $\infty$-category of formal thickenings of $T$ is equivalent to that
  of groupoid objects in formally algebraic derived stacks over $T$
  (\emph{via} the $\infty$-functor sending a thickening
  $T\to\shname{F}$ to its simplicial kernel, or \v{C}ech nerve), and
  following the philosophy of formal moduli problems it can be
  considered as a model for the $\infty$-category of
  $\mathcal{L}_{\infty}$-algebroids.

  We have the sequence of adjunctions
  $u^{\ast}\dashv u_{\ast}\dashv u^{!}$, implying that the comonad
  $u^{\ast}u_{\ast}$ is left-adjoint to the monad $u^{!}u_{\ast}$ (on
  $\operatorname{Ind}(\cat{Coh}^{\mathrm{b}}(T))$, only
  $u^{\ast}u_{\ast}$ restricting to a comonad on
  $\cat{Coh}^{\mathrm{b}}(T)$ when $u$ is quasi-smooth
  by~\cite[Chapter 4., Lemma 3.1.3]{gaitsgory17}). Let us write
  $T\xrightarrow{\widehat{u}}\widehat{M}_{T}\xrightarrow{p}M$ the
  factorisation of $u$, so that
  $u^{!}u_{\ast}=\widehat{u}^{!}p^{!}p_{\ast}\widehat{u}_{\ast}$. Note
  that $p\colon M\times_{M_{\mathrm{dR}}}T_{\mathrm{dR}}\to M$ is the
  canonical projection, and as both $T_{\mathrm{dR}}$ and
  $M_{\mathrm{dR}}$ are étale over $\spec{\Bbbk}$ it is also an étale
  morphism, and we recover $\widehat{u}^{!}\widehat{u}_{\ast}$.
  Following~\cite[Chapter 8, 4.1.2]{gaitsgory17b}, the monad
  $u^{!}u_{\ast}$ becomes the universal enveloping algebra of the
  $\mathcal{L}_{\infty}$-algebroid associated with $u$, endowed with
  the Poincaré--Birkhoff--Witt filtration. As the $\infty$-functor of
  assciated graded is conservative when restricted to (co)connective
  filtrations, we only need an expression for the associated graded of
  the PBW filtration. The result is then nothing but the PBW
  isomorphism of~\cite[Chapter 9, Theorem 6.1.2]{gaitsgory17b} stating
  that for any regular embedding of derived stacks
  $u\colon T\hookrightarrow M$, the monad
  $\widehat{u}^{!}\widehat{u}_{\ast}$ on
  $\operatorname{Ind}(\cat{Coh}^{\mathrm{b}}(T))$ is equivalent to
  tensoring by
  $\shname{Sym}_{\shname{O}_{T}}(\mathbb{T}_{\widehat{u}})$, and
  $\mathbb{T}_{\widehat{u}}=\mathbb{T}_{u}$ since $p$ is
  étale. Passing back to the adjoint, we do obtain that
  $u^{\ast}u_{\ast}$ is equivalent to tensoring with
  $\shname{Sym}_{\shname{O}_{T}}(\mathbb{T}_{u}^{\vee})$.

  A similar equivalence between the Hopf comonad $u^{\ast}u_{\ast}$
  and tensoring by the jet algebra (the dual of the universal
  enveloping algebra) of $\mathbb{T}_{u}$ was established in
  \cite[Theorem 1.3]{calaque14:_lie} using the model of dg-Lie
  algebroids for $\mathcal{L}_{\infty}$-algebroids (see
  \cite[Proposition 4.3, Theorem 4.11]{calaque18:_formal} for a
  precise statement of the equivalence between dg-Lie algebroids and
  formally algebraic derived stacks as models for
  $\mathcal{L}_{\infty}$-algebroids). However this approach does not
  provide the PBW theorem needed to identify the jet algebra of
  $\mathbb{T}_{u}$ with $\shname{Sym}(\mathbb{L}_{u})$.

  Finally, it is easy to see from~\cref{pros:cotgt-cplx-vect-bndl}
  that the base-change property of cotangent complexes and the fibre
  sequence associated with the composition $\varpi\circ s=\id$ imply
  $\mathbb{L}_{u}=u^{\ast}\mathbb{L}_{s}
  =u^{\ast}s^{\ast}\mathbb{L}_{\varpi}[1]=u^{\ast}\shname{F}^{\vee}[1]$.

  Then, conservativity of the restriction of $u^{!}$ to the
  $\infty$-category
  $\operatorname{Ind}(\cat{Coh}^{\mathrm{b}}(M)_{T})
  \simeq\operatorname{Ind}(\cat{Coh}^{\mathrm{b}}(\widehat{M}_{T}))$
  of coherent sheaves with support (by~\cite[Chapter 4, Proposition
  6.1.3 (c)]{gaitsgory17}) gives another reason for the equivalence
  $u_{\ast}\shname{O}_{Z}\simeq(\cofib(\widetilde{s}))/(t-1)$.
\end{remark}

Although it is not possible to directly relate $s$ and the zero
section at the geometric level and to obtain an expression of
$u_{\ast}\shname{O}_{T}$ in terms of the Euler class of
$\shname{F}^{\vee}$, passing to $G$-theory a homotopy between the maps
they induce always does exist, and hence we recover the classical
formulation of the quantum Lefschetz hyperplane formula.

\begin{warning}
  \label{remark:boundedness}
  When $T$ is quasi-smooth, $\shname{O}_{T}$ belongs to
  $\cat{Coh}^{\mathrm{b}}(T)$ so by~\cite[Chapter 4, Lemma
  5.1.4]{gaitsgory17} $u_{\ast}\shname{O}_{T}$ is in
  $\cat{Coh}^{\mathrm{b}}(M)$ and thus defines a class in
  $G_{0}(M)$.

  However this is no longer the case if $T$ is not quasi-smooth (or,
  more generally, when the embedding $u\colon T\hookrightarrow M$ is
  not quasi-smooth even if $T$ itself is); for example when $s=0$,
  $\bigwedge^{\bullet}\shname{F}^{\vee}$ will fail to be bounded if
  $\shname{F}$ does not have Tor-amplitude concentrated in degree $0$.
\end{warning}

We recall the notation of the $G$-theoretic Euler class of a locally
free $\shname{O}_{M}$-module $\shname{G}$ of finite rank:
$\lambda_{-1}(\shname{G})
\coloneqq\bigl[\bigwedge^{\bullet}\shname{G}\bigr]
=\sum_{i\geq0}\bigl[\bigwedge^{i}\shname{G}[i]\bigr] =\sum_{i}(-1)^{i}
\bigl[\bigwedge^{i}\shname{G}\bigr]\in G_{0}(M)$.
  
\begin{corlr}[{\cite[Lemma 2.1]{khan19:_excess}}]
  \label{corlr:zero-sec-k-thry}
  Suppose $\shname{F}$ is a vector bundle. There is an equivalence of
  $G$-theory operators
  \begin{equation}
    \label{eq:zero-sec-k-thry-formula}
    u_{\ast}u^{\ast}
    \simeq(-)\otimes\lambda_{-1}(\shname{F}^{\vee})
    \colon G(M)\to G(M)\text{.}
  \end{equation}
\end{corlr}

\begin{proof}
  We first note that, by definition, $\shname{F}$ being locally free
  of finite rank means that it is (flat-locally) almost perfect, which
  makes it bounded, and flat, which makes it of Tor-amplitude
  concentrated in $[0]$ and implies that $\shname{F}^{\vee}[1]$ has
  Tor-amplitude in $[-1,0]$ so that its symmetric algebra is still
  bounded and thus in $\cat{Coh}^{\mathrm{b}}(M)$, defining an element
  of $G_{0}(M)$.

  By~\cite[Lemma 1.3]{khan19:_excess}, the fibre sequence
  $\shname{O}_{M}\to\cofib(\widetilde{s})\to\shname{F}^{\vee}[1]$
  implies that
  $[\shname{Sym}_{\shname{O}_{M}}^{n}(\cofib\widetilde{s})]
  =\oplus_{i=0}^{n}[\shname{Sym}^{n-i}(\shname{O}_{M})
  \otimes\shname{Sym}^{i}(\shname{F}^{\vee}[1])]$ for all $n\geq0$. By
  the $\mathbb{A}^{1}$-invariance of $G$-theory we may remove the
  symmetric algebra of $\shname{O}_{M}$, which gives the result.
\end{proof}

\section{The geometric Lefschetz principle}
\label{sec:geom-lefsch-princ}

\subsection{Review of the derived moduli stack of stable maps}
\label{sec:revi-deriv-moduli}

Let $X$ be a target derived $1$-Artin stack. We denote
$\pi_{g,n}\colon\stkcurv[g]{n}\to\modstk[g]{n}$ --- omitting mention of
the twisted structure --- the universal curve over the moduli stack of
prestable stacky curves of genus $g$ with $n$ markings (and arbitrary
orders of isotropy groups at the markings).

\begin{remark}
  \label{rmrk:stbl-curv-derived-target}
  Note that we can allow $X$ to be derived --- although it is still
  required to be only $1$-algebraic, as otherwise twisted curves will
  not be enough to ensure properness of the stack of stable maps to it
  --- without any change to the usual theories of stable maps to $X$,
  as the moduli problem for prestable curves parametrises \emph{flat}
  families, whose fibres over a derived stack must still be
  classical. More precisely, \cite[Theorem
  8.1.3]{lurie04:_deriv_algeb_geomet} shows (see also
  \cite[Proposition 4.5]{porta20:_non_k} for a precise proof of the
  non-archimedean analogue) that the obvious extension of the moduli
  problem for prestable curves to a derived moduli problem is
  representable by a classical DM stack $\modstk[g]{n}$.
\end{remark}

The $\infty$-category of derived stacks (or any of its slices), as an
$\infty$-topos, is also cartesian closed, with internal hom denoted
$\rmap(-,-)$; the property of being right-adjoint to the cartesian
product imposes that, as a functor of points, for any base $B$ and
$B$-stacks $M$ and $N$, the $B$-stack $\rmap_{/B}(M,N)$ be given by
\begin{equation}
  \label{eq:map-stack-fctr-pts}
  \rmap_{/B}(M,N)\colon(T\to
  B)\mapsto\hom_{\cat{dSt}_{/B}}(M\times_{B}T,N)\text{.}
\end{equation}

\begin{pros}[{\cite[(4.3.4)]{mann18:_brane_gromov_witten_k},
    \cite[Proposition 5.1.10]{halpern-leistner14:_mappin},
    \cite[Proposition 19.1.4.1 (2)]{lurie19:_spect_algeb_geomet}}]
  \label{pros:map-stk-tgt-cplx}
  Let $M$ be a base derived stack and $C\to M$ and $D\to M$ be two
  $M$-derived stacks. Then
  \begin{equation}
    \label{eq:map-stk-tgt-cplx}
    \mathbb{T}_{\rmap_{/M}(C,D)/M}=\varpi_{\ast}\ev^{\ast}\mathbb{T}_{D/M}
  \end{equation}
  where $\varpi\colon C\times_{M}\rmap_{/M}(C,D)\to\rmap_{/M}(C,D)$ is
  the projection and $\ev\colon\rmap_{/M}(C,D)\to D$ is the evaluation
  map.
\end{pros}

\begin{remark}
  \label{rmrk:cotgt-cplx-obstr-thry}
  In the case of the open moduli substack of stable maps (recall that
  a Zariski-open immersion, like any étale map, has vanishing relative
  cotangent complex), we recognise in~\cref{eq:map-stk-tgt-cplx} the
  formula defining the perfect obstruction theory used to define the
  virtual fundamental class in Gromov--Witten theory,
  \emph{cf}~\cite[Proposition 4.3.1]{mann18:_brane_gromov_witten_k}.
\end{remark}

\begin{corlr}
  \label{corlr:stbl-map-mod-qsm}
  If $X$ is smooth (resp. smooth with convex tangent bundle), then the
  derived stack
  $\rmap_{/\modstk[g]{n}}\left(\stkcurv[g]{n},X\times\modstk[g]{n}\right)$
  is a quasi-smooth (resp. smooth). \qed{}
\end{corlr}

\begin{remark}
  \label{rmrk:truncat-rmap-thicken}
  For any classical scheme $T\to\modstk[g]{n}$, we can compute that
  \begin{equation}
    \label{eq:rmap-thicken}
    \begin{split}
      \bigl(\truncat\rmap_{/\modstk[g]{n}}\left(\stkcurv[g]{n},
        X\times\modstk[g]{n}\right)\bigr)& (T\to\modstk[g]{n})\\
      &\!\!=\hom_{\cat{dSt}}\bigl(\stkcurv[g]{n}
      \lmtimes_{\modstk[g]{n}}T, X\bigr)\\
      &\!\!\simeq\hom_{\cat{dSt}}\bigl(\stkcurv[g]{n}
      \lmtimes_{\modstk[g]{n}}^{\func{t}}T,
      X\bigr)\\
      &\!\simeq\hom_{\cat{St}}\bigl(\stkcurv[g]{n}
      \lmtimes_{\modstk[g]{n}}^{\func{t}}T,
      \truncat{X}\bigr)\\
      &\!\!=\bigl(\shname{Map}_{/\modstk[g]{n}}\left(\stkcurv[g]{n},
        \truncat{X}\times\modstk[g]{n}\right)\bigr)
      (T\to\modstk[g]{n})
    \end{split}
  \end{equation}
  where the first isomorphism is because
  $\stkcurv[g]{n}\to\modstk[g]{n}$ is flat and the second from the
  right-adjoint property of $\truncat$. This shows that
  $\rmap_{/\modstk[g]{n}}\left(\stkcurv[g]{n},X\times\modstk[g]{n}\right)$
  is a derived thickening of the classical mapping stack
  $\shname{Map}_{/\modstk[g]{n}}\left(\stkcurv[g]{n},
    X\times\modstk[g]{n}\right)$ (see also~\cite[Theorem 2.2.6.11,
  hypothesis (1)]{toen08:_homot}).
\end{remark}

We can view
$\rmap_{/\modstk[g]{n}}\left(\stkcurv[g]{n},X\times\modstk[g]{n}\right)$
as a (derived) moduli stack for families of \emph{pre}stable maps to
$X$; in particular, it is only Artin and not Deligne--Mumford. Since
our reasoning for proving the quantum Lefschetz principle is purely
formal, it will mainly work at the level of these general mapping
stacks. However, we are interested in a more geometric subclass of
maps, which only have finite automorphisms and define a
Deligne--Mumford substack: this comes down to imposing a stability
condition on the maps.

Since our result holds for stacky targets as well as schematic ones,
we will use an adapted stability condition, inspired by the quasimap
stability condition of~\cite{ciocan-fontanine15:_orbif} for global
quotient orbifolds, and laid out for this generality in some more
detail in~\cite[\S{4.2.1.1}]{kern21:_categ_delig}.

\begin{construct}[Stability condition]
  \label{constr:stab-cond-def}
  Let
  $\shname{L}=\shname{L}_{0}\otimes\varepsilon
  \in\operatorname{Pic}(X)\otimes_{\mathbb{Z}}\mathbb{Q}$ be a line
  bundle with $\varepsilon$ positive.

  Say, following~\cite{heinloth18:_hilber_mumfor} as a simplified
  version of the criterion of~\cite{halpern-leistner18}, that a point
  $x$ of $X$ is $\shname{L}_{0}$-stable (or equivalently, since
  $\varepsilon>0$, $\shname{L}$-stable) if for any map
  $f\colon[\mathbb{A}^{1}/\mathbb{G}_{\mathrm{m}}]\to X$ such that
  $f(0)\neq f(1)$, the weight of the $\mathbb{G}_{\mathrm{m}}$-action
  on $f(0)^{\ast}\shname{L}_{0}$ (a quasicoherent sheaf on
  $\{0\}\simeq[\ast/\mathbb{G}_{\mathrm{m}}]
  \subset[\mathbb{A}^{1}/\mathbb{G}_{\mathrm{m}}]$, viewed as a
  $\mathbb{G}_{\mathrm{m}}$-equivariant module) is negative. We will
  also require (as in~\cite{heinloth18:_hilber_mumfor}) that the
  stable points have finite isotropy groups, so that the
  $\shname{L}$-stable locus defines a Deligne--Mumford substack
  $X^{\text{$\shname{L}$-st}}\subset X$.

  If $(C;\Sigma_{1},\dots,\Sigma_{n})$ is an $n$-pointed stacky curve,
  a representable morphism $C\to X$ is said to be
  \textbf{pre-$\shname{L}$-quasistable} if it maps the generic point
  of any irreducible component of $C$ to $X^{\text{$\shname{L}$-st}}$
  (so that it has only a finite number of basepoints), and its
  basepoints are disjoint from the special points of $C$.

  Since $X^{\text{$\shname{L}$-st}}$ is a DM stack, it makes sense to
  require in addition that the restriction of $\shname{L}_{0}$ be
  ample, and we do so. We can now further say that $f\colon C\to X$ is
  \textbf{$\shname{L}$-quasistable} if
  \begin{enumerate}
  \item letting $e$ denote the least common multiple of the
    $\operatorname{ord}(\shname{Aut}(x))$ for $x$ points of $X$ and
    $\uly{C}$ the coarse moduli space of $C$, we have that
    \begin{equation}
      \label{eq:stab-cond}
      \omega_{\uly{C}}\left(\sum_{i=1}^{n}\uly{\Sigma_{i}}\right)
      \otimes(f^{\ast}\shname{L}_{0}^{e})^{\varepsilon/e}
    \end{equation}
    is ample;
  \item for any point $p$ of $C$,
    $\varepsilon\cdot\operatorname{lgth}_{f}(p)\leq1$, where
    $\operatorname{lgth}_{f}(p)$ is the order of contact of $f$ with
    the unstable locus of $X$ at $p$.
  \end{enumerate}
\end{construct}

The quasistability condition for a map from a stable curve to $X$,
that is a point of
$\shname{Map}_{/\modstk[g]{n}}\left(
  \stkcurv[g]{n},X\times\modstk[g]{n}\right)$, is open, and thus
defines, for any target class $\beta$ on $X$, an open substack which
we denote $\compmodsp[g]{n}(X,\beta)$ --- leaving the polarisation
$\shname{L}$ implicit since it will not play a role in the results of
this paper.

Now, by~\cite[Corollary 2.2.2.10]{toen08:_homot} the (small) Zariski
$\infty$-sites of a derived stack $M$ and of its truncation
$\truncat{M}$ are equivalent (and in particular $1$-sites). It ensues
as in~\cite[Proposition 2.1]{schurg15:_deriv} that any open substack
$U$ of $\truncat{M}$ lifts uniquely to an open sub-derived stack
$\mathbb{R}U\subset M$ such that $\mathbb{R}U\times_{M}\truncat{M}=U$
(so in particular $\mathbb{R}U$ is a derived thickening of $U$).

\begin{definition}
  \label{def:mod-dstk-stbl-map}
  The \textbf{derived moduli stack
    $\mathbb{R}\compmodsp[g]{n}(X,\beta)$} of genus-$g$, $n$-pointed
  \textbf{stable quasimaps to $X$} of class $\beta$ is the open
  sub-derived stack of
  $\rmap_{/\modstk[g]{n}}\left(\stkcurv[g]{n},X\times\modstk[g]{n}\right)$
  corresponding to the open substack
  $\compmodsp[g]{n}(X,\beta)
  \subset\shname{Map}_{/\modstk[g]{n}}\left(
    \stkcurv[g]{n},X\times\modstk[g]{n}\right)$.
\end{definition}

By~\cref{rmrk:cotgt-cplx-obstr-thry}
and~\cref{corlr:stbl-map-mod-qsm}, when $X^{\text{st}}$ is smooth,
$\mathbb{R}\compmodsp[g]{n}(X,\beta)$ enhances to a (quasi-smooth)
derived geometric object the data of the moduli stack
$\compmodsp[g]{n}(X,\beta)$ and its perfect obstruction theory.

\subsection{Identification of the derived moduli stacks}
\label{sec:ident-deriv-moduli}

Let $X$ be an Artin derived stack and
$\shname{E}\in\cat{Perf}(\shname{O}_{X})$ a perfect
$\shname{O}_{X}$-module, giving the perfect cone
$E=\mathbb{V}_{X}(\shname{E})$. Let $s$ be a section of $E$, and
denote
\begin{equation}
  \label{eq:def-Z-zero-loc}
  Z=X\lmtimes_{s,E,0}X\subset X
\end{equation}
its (derived) zero locus.

For a fixed morphism $\pi\colon\mathfrak{C}\to\mathfrak{M}$ of derived
$\Bbbk$-stacks \emph{proper and of finite cohomological dimension}, we
consider the universal map from a base-change of $\mathfrak{C}$ over
the derived mapping $\mathfrak{M}$-stack
$\rmap_{/\mathfrak{M}}\left(\mathfrak{C},X\times\mathfrak{M}\right)$:
\begin{equation}
  \label{eq:univ-diagr-map-stk}
  \begin{tikzcd}
    \mathfrak{C}\lmtimes_{\mathfrak{M}}\rmap_{/\mathfrak{M}}
    \left(\mathfrak{C},X\times\mathfrak{M}\right)
    \arrow[d,"\func{p}"'] \arrow[r,"\ev"]
    & X \\
    \rmap_{/\mathfrak{M}}\left(\mathfrak{C},X\times\mathfrak{M}\right)
    &
  \end{tikzcd}\text{.}
\end{equation}

Let
$\mathbb{E}\coloneqq\func{p}_{\ast}\ev^{\ast}E
=\mathbb{V}_{\rmap_{/\mathfrak{M}}\left(\mathfrak{C},X\times\mathfrak{M}\right)}(
\func{p}_{\ast}\ev^{\ast}\shname{E})$ be the induced abelian (and
perfect by~\cref{rmrk:qsm-preserves-perf} if $\func{p}$ is
quasi-smooth) cone over
$\rmap_{/\mathfrak{M}}\left(\mathfrak{C},X\times\mathfrak{M}\right)$,
and $\sigma\coloneqq\func{p}_{\ast}\ev^{\ast}s$ its induced
section. Write also
$0_{\mathbb{E}}\colon\rmap_{/\mathfrak{M}}\left(\mathfrak{C},
  X\times\mathfrak{M}\right)\to\mathbb{E}$ for the zero section.

\begin{thm}
  \label{thm:identf-rmap-stk}
  There is an equivalence of
  $\rmap_{/\mathfrak{M}}(\mathfrak{C},X\times\mathfrak{M})$-derived
  stacks
  \begin{equation}
    \label{eq:compar-rmap-stk}
    \rmap_{\mathfrak{M}}\left(\mathfrak{C},Z\times\mathfrak{M}\right)
    \simeq\rmap_{/\mathfrak{M}}\left(\mathfrak{C},X\times\mathfrak{M}\right)
    \mathop{\times}_{\sigma,\mathbb{E},0_{\mathbb{E}}}
    \rmap_{/\mathfrak{M}}\left(
      \mathfrak{C},X\times\mathfrak{M}\right)\text{,}
  \end{equation}
  that is the diagram
  \begin{equation}
    \label{eq:main-rslt-abstr-cart-diagr}
    \begin{tikzcd}
      \rmap_{\mathfrak{M}}\left(\mathfrak{C},Z\times\mathfrak{M}\right)
      \arrow[d,"u_{2}"'] \arrow[r,"u_{1}"] \arrow[dr,phantom,very near
      start,"\lrcorner"] &
      \rmap_{/\mathfrak{M}}\left(\mathfrak{C},X\times\mathfrak{M}\right)
      \arrow[d,"\sigma=\func{p}_{\ast}\ev^{\ast}s"] \\
      \rmap_{/\mathfrak{M}}\left(\mathfrak{C},X\times\mathfrak{M}\right)
      \arrow[r,"0_{\mathbb{E}}"'] & \mathbb{E}=\func{p}_{\ast}\ev^{\ast}E
    \end{tikzcd}
  \end{equation}
  is cartesian.
\end{thm}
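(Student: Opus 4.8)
The plan is to combine two facts: that a relative mapping stack preserves limits in its target, and that forming such a mapping stack into the total space of a perfect cone yields the total space of the derived pushforward of the underlying module.

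Since $Z=X\mtimes_{s,E,0_{E}}X$ by construction and the functor $(-)\times\mathfrak{M}$ preserves pullbacks, the derived $\mathfrak{M}$-stack $Z\times\mathfrak{M}$ is the fibre product of two copies of $X\times\mathfrak{M}$ over $E\times\mathfrak{M}$, taken along $s\times\mathfrak{M}$ and $0_{E}\times\mathfrak{M}$. On the slice $\infty$-topos $\cat{dSt}_{k,/\mathfrak{M}}$ the $\infty$-functor $\rmap_{/\mathfrak{M}}(\mathfrak{C},-)$ is right adjoint to $(-)\mtimes_{\mathfrak{M}}\mathfrak{C}$, hence preserves all limits; applied to this cartesian square it produces a cartesian square with $\rmap_{/\mathfrak{M}}(\mathfrak{C},Z\times\mathfrak{M})$ in the upper-left corner, $\rmap_{/\mathfrak{M}}(\mathfrak{C},X\times\mathfrak{M})$ in the upper-right and lower-left corners, $\rmap_{/\mathfrak{M}}(\mathfrak{C},E\times\mathfrak{M})$ in the lower-right corner, and the two lower maps induced (by functoriality of $\rmap_{/\mathfrak{M}}(\mathfrak{C},-)$ in the target) by $s$ and $0_{E}$.

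It then remains to identify $\rmap_{/\mathfrak{M}}(\mathfrak{C},E\times\mathfrak{M})$, as a stack over $\rmap_{/\mathfrak{M}}(\mathfrak{C},X\times\mathfrak{M})$ and together with those two maps, with $\mathbb{E}=\mathbb{V}(\func{p}_{\ast}\ev^{\ast}\shname{E})$ and the pair $(0_{\mathbb{E}},\sigma)$. I would argue on functors of points. Given $\phi\colon S\to\rmap_{/\mathfrak{M}}(\mathfrak{C},X\times\mathfrak{M})$, the universal property of the relative mapping stack identifies the space of lifts of $\phi$ to $\rmap_{/\mathfrak{M}}(\mathfrak{C},E\times\mathfrak{M})$ with the space of lifts, through $E\times\mathfrak{M}\to X\times\mathfrak{M}$, of the induced evaluation map $\ev_{S}\colon\mathfrak{C}\mtimes_{\mathfrak{M}}S\to X\times\mathfrak{M}$. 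Since $\mathbb{V}_{X}(\shname{E})\times\mathfrak{M}\simeq\mathbb{V}_{X\times\mathfrak{M}}(\pr_{X}^{\ast}\shname{E})$, this space is by \cref{def:vect-bundl} equal to $\map_{\cat{QCoh}(\mathfrak{C}\mtimes_{\mathfrak{M}}S)}(\shname{O},\ev_{S}^{\ast}\shname{E})$. Writing $\func{p}$ again for the projection $\mathfrak{C}\mtimes_{\mathfrak{M}}S\to S$ (a base change of the universal one), adjunction $\func{p}^{\ast}\dashv\func{p}_{\ast}$ rewrites it as $\map_{\cat{QCoh}(S)}(\shname{O}_{S},\func{p}_{\ast}\ev_{S}^{\ast}\shname{E})$, and the base-change property of \cite[Corollary 1.4.5 (i)]{drinfeld13:_some_finit_quest_algeb_stack} — applicable since $\pi$, and hence its base change $\func{p}$, is QCA, exactly as in the proof of \cref{lemma:ab-cone-nat} — identifies $\func{p}_{\ast}\ev_{S}^{\ast}\shname{E}$ with $\phi^{\ast}\func{p}_{\ast}\ev^{\ast}\shname{E}$, that is with the $\phi$-points of $\mathbb{V}(\func{p}_{\ast}\ev^{\ast}\shname{E})=\mathbb{E}$. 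Running the same adjunction-and-base-change identification on the universal sections carries the two distinguished maps to the zero section $0_{\mathbb{E}}$ and to $\func{p}_{\ast}\ev^{\ast}s=\sigma$ respectively. Substituting into the square of the previous paragraph yields the cartesian square \cref{eq:main-rslt-abstr-cart-diagr}.

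I expect the only genuine work to be in this last step, namely checking that the base-change equivalence $\func{p}_{\ast}\ev_{S}^{\ast}\shname{E}\simeq\phi^{\ast}\func{p}_{\ast}\ev^{\ast}\shname{E}$ is natural in $S$ and compatible with the identification of the two distinguished sections; this is precisely where the QCA hypothesis on $\pi$ is needed, just as it was in \cref{lemma:ab-cone-nat}. A cleaner packaging that avoids the pointwise bookkeeping is to establish once and for all that the composite $\infty$-functor $\rmap_{/\mathfrak{M}}(\mathfrak{C},(-)\times\mathfrak{M})\circ\mathbb{V}_{X}$ on $\cat{QCoh}(X)$ is naturally equivalent to $\mathbb{V}_{\rmap_{/\mathfrak{M}}(\mathfrak{C},X\times\mathfrak{M})}\circ\func{p}_{\ast}\circ\ev^{\ast}$ — the above chase carried out at the level of $\infty$-functors — and then to evaluate this natural equivalence on $\shname{E}$ and on the two elements of $\map_{\cat{QCoh}(X)}(\shname{O}_{X},\shname{E})$ defining $s$ and the zero section.
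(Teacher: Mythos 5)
Your proof is correct and follows essentially the same two-step decomposition as the paper: first observe that $\rmap_{/\mathfrak{M}}(\mathfrak{C},-)$ preserves the cartesian square defining $Z$ (you invoke the cartesian-closed adjunction, which is exactly the shortcut the paper records in \cref{rmrk:hom-preserv-lim-adj} in lieu of its more general enriched-profunctor \cref{lemma:hom-preserv-lim}), and then identify $\rmap_{/\mathfrak{M}}(\mathfrak{C},E\times\mathfrak{M})$ with $\mathbb{E}$ together with the two structural sections. For the second step you unpack the QCA base-change directly, whereas \cref{pros:perfcone-equiv-rmap} packages it through \cref{lemma:ab-cone-nat} and then computes the two functors of points separately, but the content — adjunction for the mapping stack plus QCA base-change for $\func{p}_{\ast}$ — is identical, so the two arguments coincide.
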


The~\namecref{thm:identf-rmap-stk} will follow directly from some
formal results.

% \label{rmrk:hom-preserv-lim-adj}
We first note that, since we work in the $\infty$-category
$\cat{dSt}_{\Bbbk}$, which as an $\infty$-topos is cartesian closed, its
the internal hom $\infty$-functor $\rmap(-,-)$ is a right-adjoint to
taking cartesian product, so by~\cite[Theorem 2.4.2]{riehl21:_elemen}
preserves limits.

\begin{remark}
  This limit preservation property is also due to the more conceptual
  reason that, in enriched $\infty$-categories, limits and colimits
  can be defined representably, \emph{cf.}~\cite[Proposition
  1.1.2.2.8, Example 1.1.2.2.9]{kern21:_categ_delig}.
\end{remark}

Applying this to our case, we find that
$\rmap_{/\mathfrak{M}}\left(\mathfrak{C},Z\times\mathfrak{M}\right)$
is equivalent to the fibre product
\begin{equation}
  \label{eq:compar-rmap-stk-step-pullback}
  \rmap_{/\mathfrak{M}}\left(\mathfrak{C},X\times\mathfrak{M}\right)
  \mathop{\times}_{\rmap_{/\mathfrak{M}}\left(\mathfrak{C},E\times\mathfrak{M}\right)}
  \rmap_{/\mathfrak{M}}\left(\mathfrak{C},X\times\mathfrak{M}\right)\text{,}
\end{equation}
with structure morphisms induced by $s$ and the zero section of
$E$. Hence, in order to prove~\cref{thm:identf-rmap-stk} we only need
to identify the two derived stacks over which the fibre products are
taken (as well as the two pairs of structure maps), the derived stack
of maps to the abelian cone $E$ and the induced cone
$\mathbb{E}=\func{p}_{\ast}\ev^{\ast}E$.

\begin{pros}
  \label{pros:perfcone-equiv-rmap}
  There is an equivalence of
  $\rmap_{/\mathfrak{M}}\left(\mathfrak{C},
    X\times\mathfrak{M}\right)$-derived stacks
  \begin{equation}
    \label{eq:compar-perfcone-rmap}
    \rmap_{/\mathfrak{M}}\left(\mathfrak{C},E\times\mathfrak{M}\right)
    \simeq\mathbb{E}\text{.}
  \end{equation}
\end{pros}

\begin{proof}
  Let
  $a\colon
  S\to\rmap_{/\mathfrak{M}}\left(\mathfrak{C},X\times\mathfrak{M}\right)$
  be an
  $\rmap_{/\mathfrak{M}}\left(\mathfrak{C},X\times\mathfrak{M}\right)$-stack,
  with corresponding family
  $C_{a}=a^{\ast}\mathfrak{C}=S\times_{\mathfrak{M}}\mathfrak{C}\to S$
  (where we implicitly push the structure maps forward along
  $\rmap_{/\mathfrak{M}}\left(\mathfrak{C},X\times\mathfrak{M}\right)
  \to\mathfrak{M}$).  Note that, as
  $\func{p}\colon\mathfrak{C}\times_{\mathfrak{M}}\rmap_{/\mathfrak{M}}
  \left(\mathfrak{C},X\times\mathfrak{M}\right)\to\rmap_{/\mathfrak{M}}
  \left(\mathfrak{C},X\times\mathfrak{M}\right)$ is just projection
  onto the first factor, we have
  \begin{equation}
    \label{eq:basechg-curv-rmap}
    \begin{split}
      \func{p}^{-1}(a)&
      =S\times_{\rmap_{/\mathfrak{M}}\left(\mathfrak{C},X\times\mathfrak{M}\right)}
      \left(\rmap_{/\mathfrak{M}}\left(
          \mathfrak{C},X\times\mathfrak{M}\right)\times_{\mathfrak{M}}
        \mathfrak{C}\right)\\
      &=S\times_{\mathfrak{M}}\mathfrak{C}\eqqcolon C_{a}\text{,}
    \end{split}
  \end{equation} as seen in the cartesian diagram
  \begin{equation}
    \label{eq:basechg-curv-rmap-diagr}
    \begin{tikzcd}
      C_{a}=S\lmtimes_{\mathfrak{M}}\mathfrak{C} \arrow[d]
      \arrow[r,"\widetilde{a}"] \arrow[dr,phantom,very near
      start,"\lrcorner"] & \rmap_{/\mathfrak{M}}\left(\mathfrak{C},
        X\times\mathfrak{M}\right)\lmtimes_{\mathfrak{M}}\mathfrak{C}
      \arrow[d,"\func{p}"] \arrow[r] \arrow[dr,phantom,very near
      start,"\lrcorner"] & \mathfrak{C} \arrow[d] \\
      S \arrow[r,"a"'] & \rmap_{/\mathfrak{M}}\left(\mathfrak{C},
        X\times\mathfrak{M}\right) \arrow[r] & \mathfrak{M}
    \end{tikzcd}\text{.}
  \end{equation}

  By~\cref{lemma:ab-cone-nat}, as
  $\pi\colon\mathfrak{C}\to\mathfrak{M}$ was supposed of finite
  cohomological dimension and morphisms of finite cohomological
  dimension are stable by base-change, we have
  \begin{equation}
    \label{eq:vbnld-rmap-value-comput}
    \begin{split}
      \mathbb{E}(a)
      &=\mathbb{V}_{\rmap_{/\mathfrak{M}}\left(\mathfrak{C},X\times\mathfrak{M}\right)}(
      \func{p}_{\ast}\ev^{\ast}\shname{E})(a)\\
      &=\func{p}_{\ast}\mathbb{V}
      _{\mathfrak{C}\times_{\mathfrak{M}}\rmap_{/\mathfrak{M}}\left(\mathfrak{C},X\times\mathfrak{M}\right)}(
      \ev^{\ast}\shname{E})(a)\\
      &=\mathbb{V}
      _{\mathfrak{C}\times_{\mathfrak{M}}\rmap_{/\mathfrak{M}}\left(\mathfrak{C},X\times\mathfrak{M}\right)}(
      \ev^{\ast}\shname{E})(C_{a})\\
      &=\map_{\cat{Perf}(\shname{O}_{C_{a}})}\left(\shname{O}_{C_{a}},
        \widetilde{a}^{\ast}\ev^{\ast}\shname{E}\right)
      =\map_{/X}(C_{a},E)\text{,}
    \end{split}
  \end{equation}
  where $\ev\circ\widetilde{a}\colon C_{a}\to X$ is the map from a
  family of curves to $X$ classified by $a$.
  
  Meanwhile, we have by definition
  \begin{equation}
    \label{eq:rmap-vb-value-comput}
    \begin{split}
      &\rmap_{/\mathfrak{M}}\left(
        \mathfrak{C},E\times\mathfrak{M}\right)(a)\\
      =&\map_{\rmap_{/\mathfrak{M}}\left(\mathfrak{C},X\times\mathfrak{M}\right)}(S,
      \rmap_{/\mathfrak{M}}\left(\mathfrak{C},E\times\mathfrak{M}\right))\\
      \simeq&\map_{/\mathfrak{M}}(S,
      \rmap_{/\mathfrak{M}}\left(\mathfrak{C},E\times\mathfrak{M}\right))
      \lmtimes
      _{\map_{/\mathfrak{M}}(S,\rmap_{/\mathfrak{M}}\left(\mathfrak{C},X\times\mathfrak{M}\right))}
      \{a\}\text{.}
          \end{split}
  \end{equation}
  Indeed\footnote{This argument was suggested to the author by
    Benjamin
    Hennion}, % from\cite[Lemma 6.1.3.13]{lurie09:_higher} the standard
  % categorical arguments show that
  \cite[Lemma 5.5.5.12]{lurie09:_higher} shows that for any
  morphism %of derived stacks
  $p\colon M^{\prime}\to M$ in an $\infty$-category and any cospan
  $S\to M^{\prime}\leftarrow T$ over $M^{\prime}$ we have
  $\map_{/M^{\prime}}(S,T)\simeq\map_{/M}(S,T)\times_{\map_{/M}(S,M^{\prime})}\{p\}$
  ; and we can compute
  \begin{equation}
    \label{eq:rmap-vb-comput-end}
    \begin{split}
      &\rmap_{/\mathfrak{M}}\left(\mathfrak{C},
        E\times\mathfrak{M}\right)(a)\\
      \simeq&\map_{/\mathfrak{M}}(S\times_{\mathfrak{M}}\mathfrak{C},
      E\times\mathfrak{M})
      \lmtimes
      _{\map_{/\mathfrak{M}}(S\times_{\mathfrak{M}}\mathfrak{C},X\times\mathfrak{M})}
      \{a\}\\
      =&\map_{/X\times\mathfrak{M}}(C_{a},E\times\mathfrak{M})
      =\map_{/X}(C_{a},E)\text{.}
    \end{split}
  \end{equation}
\end{proof}

This completes the proof of~\cref{thm:identf-rmap-stk}.\qed{}
\linebreak[4]

In our setting, we will have $\mathfrak{M}=\modstk[g]{n}$,
$\mathfrak{C}=\stkcurv[g]{n}$, and the morphism of finite
cohomological dimension $\pi\colon\mathfrak{C}\to\mathfrak{M}$ is
$\pi_{g,n}$ the universal curve over the moduli stack of prestable
stacky curves of genus $g$ with $n$ marked
points. % One may also wish to
% replace $\modstk[g]{n}$ by an appropriate moduli stack of twisted
% curves to accommodate for orbifold targets; however we have not done
% so as we believe that the current notion of morphism of twisted curves
% is not suitable for the derived stacks we need as targets.

We will also write $\func{p}_{g,n}=\func{p}$, $\ev_{g,n}=\ev$ and
$\mathbb{E}_{g,n}=\mathbb{E}=(\func{p}_{g,n})_{\ast}\ev_{g,n}^{\ast}E$.

\begin{corlr}[Geometric quantum Lefschetz principle]
  \label{corlr:main-result}
  Suppose $X$ satisfies the conditions required
  for~\cref{constr:stab-cond-def} (that is, $X$ is $1$-Artin and has a
  line bundle $\shname{L}_{0}$ whose stable locus is
  $1$-Deligne--Mumford and quasiprojective with $\shname{L}_{0}$ as
  ample polarisation), and fix a class $\beta\in A_{1}X$.  There is an
  equivalence of
  $\rmap_{/\modstk[g]{n}}\left(\stkcurv[g]{n},
    Z\times\modstk[g]{n}\right)$-derived stacks
  \begin{equation}
    \label{eq:main-rslt-compar-stabl-map-vb}
    \coprod_{i_{\ast}\gamma=\beta}\mathbb{R}\compmodsp[g]{n}(Z,\gamma)
    \simeq\mathbb{R}\compmodsp[g]{n}(X,\beta)
    \mathop{\times}_{\restr{\mathbb{E}}_{\mathbb{R}\compmodsp[g]{n}(X,\beta)}}
    \mathbb{R}\compmodsp[g]{n}(X,\beta)\text{.}
  \end{equation}
  % that is the diagram
  % \begin{equation}
  %   \label{eq:main-rslt-cart-diagr}
  %   \begin{tikzcd}
  %     \coprod\limits_{i_{\ast}\gamma=\beta}\mathbb{R}\compmodsp[g]{n}(Z,\gamma)
  %     \arrow[d,"u_{2}"'] \arrow[r,"u_{1}"] \arrow[dr,phantom,very near
  %     start,"\lrcorner"] & \mathbb{R}\compmodsp[g]{n}(X,\beta)
  %     \arrow[d,"\func{p}_{g,n;\ast}\ev_{g,n}^{\ast}s"] \\
  %     \mathbb{R}\compmodsp[g]{n}(X,\beta) \arrow[r,"0_{\mathbb{E}}"']
  %     & \restr{\mathbb{E}_{g,n}}_{\mathbb{R}\compmodsp[g]{n}(X,\beta)}
  %   \end{tikzcd}
  % \end{equation}
  % is cartesian.
\end{corlr}

\begin{proof}
  Note first that, as Zariski-open immersions are stable by pullbacks,
  both
  $\coprod_{i_{\ast}\gamma=\beta}\mathbb{R}\compmodsp[g]{n}(Z,\gamma)$
  and
  $\mathbb{R}\compmodsp[g]{n}(X,\beta) \mathop{\times}_{\mathbb{E}}
  \mathbb{R}\compmodsp[g]{n}(X,\beta)$ are open sub-derived stacks of
  $\rmap_{/\modstk[g]{n}}\left(\stkcurv[g]{n},Z\times\modstk[g]{n}\right)$,
  so by~\cite[Proposition 2.1]{schurg15:_deriv} to show that they are
  equal it is enough to show that their truncations define identical
  substacks of
  $\truncat\rmap_{/\modstk[g]{n}}\left(
    \stkcurv[g]{n},Z\times\modstk[g]{n}\right)$.

  As explained in~\cref{rmrk:truncat-rmap-thicken}, the truncation of
  such a derived mapping stack with (necessarily truncated) source
  flat over the truncated base is
  $\shname{Map}_{/\modstk[g]{n}}\left(\stkcurv[g]{n},
    \truncat(Z\times\modstk[g]{n})\right)$, and similarly
  \begin{equation}
    \truncat\left(\mathbb{R}\compmodsp[g]{n}(X,\beta)
      \mathop{\times}_{\mathbb{E}}
      \mathbb{R}\compmodsp[g]{n}(X,\beta)\right)
    =\compmodsp[g]{n}(X,\beta)
    \mathop{\times}_{\truncat\mathbb{E}}^{\func{t}}
    \compmodsp[g]{n}(X,\beta)\text{.} 
  \end{equation}
  In addition, the truncation $\infty$-functor commutes with colimits
  (see~\cite[proof of Lemma 2.2.4.1]{toen08:_homot}) so
  \begin{equation}
    \truncat\biggl(\coprod_{i_{\ast}\gamma=\beta}
      \mathbb{R}\compmodsp[g]{n}(Z,\gamma)\biggr)
    =\coprod_{i_{\ast}\gamma=\beta}\compmodsp[g]{n}(\truncat{Z},\gamma)\text{.}
  \end{equation}

  We now compare the two stacks (which are $1$-algebraic, and thus
  $1$-stacks, \emph{i.e.} taking values in $1$-groupoids)
  pointwise. For any $S\to\modstk[g]{n}$ (with corresponding prestable
  genus-$g$ curve $C_{S}\to S$), we have that
  $\bigl(\coprod_{i_{\ast}\gamma=\beta}
  \compmodsp[g]{n}(\truncat{Z},\gamma)\bigr)(S)
  =\coprod_{i_{\ast}\gamma=\beta}\compmodsp[g]{n}(\truncat{Z},\gamma)(S)$
  is tautologically the disjoint union (over
  $\gamma\in i_{\ast}^{-1}(\beta)$) of the groupoids of $S$-indexed
  families of stable maps from $C_{S}$ to $\truncat{Z}$ of class
  $\gamma$, and
  \begin{equation}
    \label{eq:22}
    \Bigl(\compmodsp[g]{n}(X,\beta)
    \mathop{\times}_{\truncat\mathbb{E}}^{\func{t}}
    \compmodsp[g]{n}(X,\beta)\Bigr)(S)
    \simeq\compmodsp[g]{n}(X,\beta)(S)
    \mathop{\times}_{\restr{\truncat\mathbb{E}}_{\compmodsp[g]{n}(X,\beta)}(S)}
    \compmodsp[g]{n}(X,\beta)(S)
  \end{equation}
  with
  $\restr{\truncat\mathbb{E}}_{\compmodsp[g]{n}(X,\beta)}(S)
  =\hom(C_{S},\truncat{E})$. The
  latter $(2,1)$-fibre product (of groupoids) consists of pairs of
  stable maps $f_{1},f_{2}$ from $C_{S}$ to $X$ equipped with an
  equivalence between their images $s\circ f_{1}$ and $0\circ f_{2}$
  in $\truncat{E}$,
  % is described in the
  % following way:
  % \begin{description}
  % \item[an object] consists of a pair of stable maps $f_{1},f_{2}$ from
  %   $C_{S}$ to $X$ and an isomorphism between their images in
  %   $\hom(C_{S},E)$, that is an automorphism $\varphi$ of $C_{S}$ such
  %   that $s\circ f_{1}=0\circ f_{2}\circ\varphi$, while
  % \item[an automorphism]
  %   $(f_{1},f_{2},\varphi)\simeq(f_{1},f_{2},\varphi)$ is given by a
  %   pair of automorphisms of $C_{S}$ compatible with all the data; or
  %   automorphisms of the two stable maps compatible with $\varphi$ (so
  %   that when $\varphi$ is not $\id_{C_{S}}$ the notion of
  %   automorphism is more rigid than the usual automorphisms of a
  %   single stable map).
  % \end{description}
  so that the obvious functor
  \begin{equation}
    \label{eq:23}
    \coprod_{i_{\ast}\gamma=\beta}\compmodsp[g]{n}(\truncat{Z},\gamma)(S)
    \to\Bigl(\compmodsp[g]{n}(X,\beta)
    \mathop{\times}_{\truncat\mathbb{E}}^{\func{t}}
    \compmodsp[g]{n}(X,\beta)\Bigr)(S)
  \end{equation}
  sending a stable map $f\colon C_{S}\to Z$ to
  $(i_{Z\hookrightarrow X}\circ f,i_{Z\hookrightarrow X}\circ
  f,\id_{C_{S}})$ is clearly an equivalence.
\end{proof}

We may now apply~\cref{pros:zerosec-struct-shf} to deduce a proof
of~\cref{thm:main-thm}. In the next~\cref{sec:funct-inter-theory}, we
will see how we can recover from this and~\cref{corlr:zero-sec-k-thry}
the classical (virtual) quantum Lefschetz formula.

\begin{remark}
  \label{rmrk:compar-cotgt-cplx-rmap}
  Further evidence for this geometric form of the quantum Lefschetz
  principle can also be found by comparing the tangent complexes. Let
  us write temporarily $M(X)$ and $M(Z)$ for the moduli stacks of
  stable maps $\mathbb{R}\compmodsp[g]{b}(X,\beta)$ and
  $\coprod_{i_{\ast}\gamma=\beta}\mathbb{R}\compmodsp[g]{n}(Z,\gamma)$,
  and $M^{\prime}(Z)$ for the zero locus
  $M(X)\times_{\mathbb{E}}M(X)$. The universal property of the latter
  stack induces a canonical morphism denoted
  $\Upsilon\colon M(Z)\to M^{\prime}(Z)$ such that
  $u_{i}^{\prime}\circ\Upsilon=u_{i}$ for $i=1,2$ where
  $u_{i}\colon M(Z)\hookrightarrow M(X)$ and
  $u_{i}^{\prime}\colon M^{\prime}(Z)\hookrightarrow M(X)$ are the
  canonical arrows (as in~\cref{eq:main-rslt-abstr-cart-diagr}).
  
  We know from~\cref{pros:map-stk-tgt-cplx} that
  $\mathbb{T}_{M(X)/\modstk[g]{n}}
  \simeq\restr{\func{p}_{g,n,\ast}\ev_{g,n}^{\ast}\mathbb{T}_{X}}
  _{M(X)}$. There is a fibre sequence
  $i_{1}^{\ast}\mathbb{L}_{X}\to\mathbb{L}_{Z}\to\mathbb{L}_{i_{1}\colon
    Z/X}$, and as $Z$ sit by definition in a cartesian square we have
  that
  $\mathbb{L}_{i_{1}}
  =i_{2}^{\ast}\mathbb{L}_{X/E}=\restr{\shname{E}^{\vee}[1]}_{Z}$
  (where once again we have written $i_{1,2}\colon Z\hookrightarrow X$
  the two canonical inclusions). As both pushforward and pullback
  preserve fibre sequences, we obtain finally that
  $\mathbb{T}_{M(Z)/\modstk[g]{n}}$ is the fibre of the morphism
  $\restr{\func{p}_{g,n,\ast}\ev_{g,n}^{\ast}\mathbb{T}_{X}}_{M(Z)}
  \to\restr{\func{p}_{g,n,\ast}\ev_{g,n}^{\ast}\shname{E}}_{M(Z)}$.

  Following the same logic, writing $M^{\prime}(Z)$ for the zero
  locus, we see that $\mathbb{T}_{M^{\prime}(Z)}$ is the fibre of
  $\mathbb{T}_{M(X)}
  =\restr{\func{p}_{g,n,\ast}\ev_{g,n}^{\ast}\mathbb{T}_{X}}_{M(X)}
  \to\restr{\func{p}_{g,n,\ast}\ev_{g,n}^{\ast}\shname{E}}_{M^{\prime}(Z)}$. But
  we have seen that
  $\Upsilon^{\ast}\circ u_{i}^{\prime,\ast}=u_{i}^{\ast}$ so it is
  clear that
  $\Upsilon^{\ast}\mathbb{T}_{M^{\prime}(Z)}\simeq\mathbb{T}_{M(Z)}$.
  
  As it is sufficient and necessary for a morphism of derived stacks
  to be an equivalence that it induce an isomorphism on the truncation
  and that its (co)tangent complex vanish, this is another way of
  proving~\cref{thm:identf-rmap-stk}.
\end{remark}

\begin{exmp}
  \label{exmp:lg-model-cotgt-vbndl}
  Let $(X,f\colon X\to\mathbb{A}^{1})$ be a Landau--Ginzburg model,
  from which we deduce the perfect cone
  $T^{\vee}X=\mathbb{V}_{X}(\mathbb{L}_{X})$ and section
  $d_{\mathrm{dR}}f$, whose zero locus is by definition the critical
  locus $\mathbb{R}\operatorname{Crit}(f)$ (which is the intersection
  of two Lagrangians in a $0$-shifted symplectic derived stack and
  thus carries a canonical $(-1)$-shifted symplectic form). Then the
  derived moduli stack of stable maps to
  $\mathbb{R}\operatorname{Crit}(f)$ is the zero locus of the induced
  section of
  \begin{equation}
    \label{eq:induced-vbnld-lg-model}
    \func{p}_{\ast}\ev^{\ast}T^{\vee}X
    =\mathbb{V}_{\mathbb{R}\compmodsp[g]{n}(X,\beta)}(
    \func{p}_{\ast}(\ev^{\ast}\mathbb{L}_{X}))
    \end{equation}
  But notice that
  \begin{equation}
    \label{eq:compar-cotgt-rmap-lg}
    T^{\vee}\mathbb{R}\compmodsp[g]{n}(X,\beta)
    \simeq\mathbb{V}_{\mathbb{R}\compmodsp[g]{n}(X,\beta)}(
    (\func{p}_{\ast}\ev^{\ast}\mathbb{T}_{X})^{\vee})
    \simeq\mathbb{V}_{\mathbb{R}\compmodsp[g]{n}(X,\beta)}(
    \func{p}_{!}\ev^{\ast}\mathbb{L}_{X})
  \end{equation}
  where
  $\func{p}_{!}\colon\shname{F}\mapsto\func{p}_{\ast}(\shname{F}^{\vee})^{\vee}
  \simeq\func{p}_{\ast}(\shname{F}\otimes\omega_{\func{p}})$ is the
  left adjoint to $\func{p}^{\ast}$ (by~\cite[Proposition
  6.4.5.3]{lurie19:_spect_algeb_geomet}), so
  $\mathbb{R}\compmodsp[g]{n}(\mathbb{R}\operatorname{Crit}(f),\beta)$
  is not a critical locus and so cannot in general be expected to
  carry a $(-1)$-shifted symplectic structure if $(g,n)$ differs from
  $(0,1)$ or $(1,0)$.

  It is also possible to go the other way, that is to obtain a
  Landau--Ginzburg model from our general setting. If
  $\varpi\colon E^{\vee}\to X$ is the dual of the perfect cone with
  section $s$, then the section $\varpi^{\ast}s$ of $\varpi^{\ast}E$
  can be paired with the tautological section $t$ of
  $\varpi^{\ast}E^{\vee}$, defining a function
  $w_{s}=\langle s,t\rangle$ on the total space
  $E^{\vee}$. By~\cite[Corollary
  3.8]{isik12:_equiv_deriv_categ_variet_singul_categ}, if $X$ is
  smooth, there is an equivalence
  $\cat{Coh}^{\mathrm{b}}(Z)
  \simeq\cat{Sing}(\mathbb{R}\operatorname{Zero}(w_{s})/\mathbb{G}_{\mathrm{m}})$
  with the $\mathbb{G}_{\mathrm{m}}$-equivariant dg-category of
  singularities of $\mathbb{R}\operatorname{Zero}(w_{s})$ (where
  $\mathbb{G}_{\mathrm{m}}$ acts by rescaling on the fibres of
  $E^{\vee}$). However we only have
  $Z=\mathbb{R}\operatorname{Crit}(w_{s})$ if $Z$ is smooth
  (see~\cite[Lemma 2.2.2]{chen19:_virtual} in the regular and
  underived case).
\end{exmp}

\section{Functoriality in intersection theory by the categorification
  of virtual pullbacks}
\label{sec:funct-inter-theory}

We have obtained (as~\cref{thm:main-thm}) a categorified form of the
quantum Lefschetz principle, which in the cases where
$\mathbb{E}=\func{p}_{\ast}\ev^{\ast}E$ is a vector bundle we can
by~\cref{corlr:zero-sec-k-thry} decategorify by passing to the
$G_{0}$-theory groups (or, more generally, the $G$-theory spectra) of
the derived moduli stacks. To show that our statement is indeed a
categorification of the quantum Lefschetz principle, it remains to
compare it with the virtual statement, in the $G$-theory of the
truncated moduli stacks. As explained in the introduction, this will
be obtained through an appropriate construction of virtual
pullbacks. These were defined in~\cite{manolache12:_virtual} (and
in~\cite{qu18:_virtual_k} for $G_{0}$-theory) from perfect obstruction
theories. Following the understanding of virtual classes and the
constructions of~\cite{mann18:_brane_gromov_witten_k}, we will give an
alternative construction from derived thickenings. To ensure
consistency, we show in~\cref{sec:comp-with-constr} that our
construction coincides with that of~\cite{qu18:_virtual_k} when both
are defined, and we use it in~\cref{sec:recov-quant-lefsch-kthry} to
get back the virtual form of the quantum Lefschetz formula.

\begin{remark}
  \label{rmrk:virt-pullbck-derived-history}
  The derived origin of virtual pullbacks was already considered
  in~\cite[Section
  7]{schurg11:_deriv_delig_mumfor_stack_perfec_obstr_theor}, where it
  is shown that any morphism of DM stacks which is the classical
  truncation of a morphism of derived DM stacks, with the induced
  obstruction theory, carries the compatibility necessary for the
  construction of a virtual pullback. However, the origin of the
  virtual classes and their precise relation to derived thickenings
  was still considered mysterious, and no direct construction of the
  virtual pullbacks from derived algebraic geometry was given.
\end{remark}

\subsection{Definition from derived geometry}
\label{sec:defin-from-deriv}

Let $f\colon X\to Y$ be a quasi-smooth morphism of derived stacks,
that is its cotangent complex $\mathbb{L}_{f\colon X/Y}$ is of perfect
Tor-amplitude in $[-1,0]$.

\begin{remark}
  \label{rmrk:qsm-pullbck-preserv-coh-g-thry}
  By \cite[Chapter 4, Lemma 3.1.3]{gaitsgory17}, as the quasi-smooth
  morphism $f$ is of finite Tor-amplitude, the pullback of
  quasicoherent sheaves $f^{\ast}$ maps $\cat{Coh}^{\mathrm{b}}(Y)$
  to $\cat{Coh}^{\mathrm{b}}(X)$. As we work in $G$-theory, which is
  the $K$-theory of the stable $\infty$-category of bounded coherent
  sheaves, the notation $f^{\ast}$ will be understood in
  this~\namecref{sec:funct-inter-theory} to mean the restriction of
  the pullback operation to coherent sheaves.
\end{remark}

Recall that, due to the theorem of the heart (\emph{cf.}~\cite[Theorem
6.1]{barwick15:_theor_heart}) and~\cite[Corollary 2.5.9.2 with
$n=0$]{lurie19:_spect_algeb_geomet}, the closed embedding
$\jmath_{X}\colon\truncat{X}\hookrightarrow X$ induces an
equivalence
$\jmath_{X,\ast}\colon G(\truncat{X})\xrightarrow{\simeq}G(X)$ in
$G$-theory, whose inverse at the level of $G_{0}$-groups is given by
$\bigl(\pi_{0}(\jmath_{X,\ast})\bigr)^{-1}\colon G_{0}(X)\ni\shname{G}
\mapsto\sum_{i\geq0}(-1)^{i}[\pi_{i}(\shname{G})]\in
G_{0}(\truncat{X})$.

It is therefore natural to define the virtual pullback along
$\truncat{f}$ to be given by the actual pullback along $f$,
intertwined with these isomorphisms.

However we wish to consider the virtual pullback as a bivariant class,
that is defined as a collection of maps
$G(Y^{\prime})\to
G(X\times_{Y}Y^{\prime})=G(X\times_{Y}^{\func{t}}Y^{\prime})$ indexed
by all $\truncat{Y}$-schemes $Y^{\prime}\to\truncat{Y}$, or more
generally by all derived $Y$-schemes $Y^{\prime}\to Y$. Then the
virtual pullback we defined should be the map corresponding to the
$\truncat{Y}$-scheme $\id_{\truncat{Y}}\colon\truncat{Y}=\truncat{Y}$.

We recall that we use the notation $\times^{\func{t}}$ (a fibre
product decorated by $\func{t}$) to differentiate the ``truncated''
($1$- $2$-categorical, for our moduli $1$-stacks) fibre products of
classical stacks from the implictly $\infty$-categorical fibre
products of derived stacks.

\begin{definition}
  \label{def:virt-pullback-bivar}
  The \textbf{bivariant virtual pullback} along $f$ is the collection,
  indexed by all $Y$-schemes $a\colon Y^{\prime}\to Y$, of maps
  $(\truncat{f})^{!,a}_{\mathrm{DAG}}\colon G(\truncat{Y}^{\prime})\to
  G(\truncat(Y^{\prime}\times_{Y}X))$ defined as follows.

  For a morphism of schemes $a\colon Y^{\prime}\to Y$, we have the
  diagram
  \begin{equation}
    \label{eq:def-vpullback-base}
    \begin{tikzcd}
      \truncat\Bigl(Y^{\prime}\lmtimes_{Y}X\Bigr)\simeq
      \truncat{Y}^{\prime}\lmtimes_{\truncat{Y}}^{\func{t}}\truncat{X}
      \arrow[dr,bend right]
      \arrow[r,hook,"\jmath_{Y^{\prime}\times_{Y}X}"] & Y^{\prime}\lmtimes_{Y}X
      \arrow[d] \arrow[r,"\widetilde{f}"] \arrow[dr,phantom,very near
      start,"\lrcorner"] & Y^{\prime}
      \arrow[d,"a"] \\
      & X \arrow[r,"f"'] & Y \\
    \end{tikzcd}\text{.}
  \end{equation}%
  Then we set
  $(\truncat{f})^{!,a}_{\mathrm{DAG}}
  \coloneqq(\jmath_{Y^{\prime}\times_{Y}X,\ast})^{-1}
  \circ\widetilde{f}^{\ast}\circ\jmath_{Y^{\prime},\ast}$.
\end{definition}

\begin{lemma}
  \label{lem:vpullbck-indep-derived}
  The virtual pullback only depends on
  $\truncat{a}\colon\truncat{Y}^{\prime}\to\truncat{Y}$. That is, for
  any $a_{1},a_{2}\colon Y^{\prime}_{1},Y^{\prime}_{2}\to Y$ with
  $\truncat{a_{1}}=\truncat{a_{2}}$, the virtual pullbacks
  $f^{!,a_{1}}_{\mathrm{DAG}}$ and $f^{!,a_{2}}_{\mathrm{DAG}}$
  induced by $a_{1}$ and $a_{2}$ are equivalent.
\end{lemma}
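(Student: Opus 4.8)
The plan is to reduce the statement to a computation of $G$-theory classes that manifestly depends only on the truncated map. Fix $a_1\colon Y'_1\to Y$ and $a_2\colon Y'_2\to Y$ with $\func{t}_0 a_1=\func{t}_0 a_2=:a_0\colon Y'_0\to\func{t}_0 Y$. The key observation is that in the defining formula $(\func{t}_0 f)^{!,a}_{\mathrm{DAG}}=(\jmath_{Y'\times_Y X,\ast})^{-1}\circ\widetilde f^{\ast}\circ\jmath_{Y',\ast}$, the outer maps $\jmath_{Y',\ast}$ and $(\jmath_{Y'\times_Y X,\ast})^{-1}$ are, by the recalled consequence of the theorem of the heart, isomorphisms depending only on the classical stacks $\func{t}_0 Y'$ and $\func{t}_0(Y'\times_Y X)$ respectively; moreover $\func{t}_0(Y'\times_Y X)\simeq \func{t}_0 Y'\times^{\func{t}}_{\func{t}_0 Y}\func{t}_0 X$ depends only on $\func{t}_0 a$. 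So the whole content is to show that the middle map $\widetilde f^{\ast}$, viewed after conjugating by these isomorphisms as an operator $G_0(\func{t}_0 Y'_0)\to G_0(\func{t}_0(Y'_0\times^{\func{t}}_{\func{t}_0 Y}\func{t}_0 X))$, is the same whether computed via $a_1$ or via $a_2$.

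First I would reduce to a universal case. Both $a_1$ and $a_2$ factor through the closed immersion $\jmath_Y\colon\func{t}_0 Y\hookrightarrow Y$ only after passing to truncations, so instead I compare each $a_i$ with the classical map $a_0$ itself, regarded as a $Y$-scheme via $\jmath_Y\circ a_0\colon Y'_0\to Y$. It therefore suffices to prove: for any $Y$-scheme $a\colon Y'\to Y$, the bivariant virtual pullback $f^{!,a}_{\mathrm{DAG}}$ agrees with $f^{!,\jmath_Y\circ\func{t}_0 a}_{\mathrm{DAG}}$. Equivalently, writing $b=\func{t}_0 a\colon Y'_0\to\func{t}_0 Y\hookrightarrow Y$, the two derived fibre products $Y'\times_Y X$ and $Y'_0\times_Y X$ produce the same conjugated pullback operator. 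The point is that there is a canonical map $Y'_0\to Y'$ (the counit $\jmath_{Y'}$), hence a map $Y'_0\times_Y X\to Y'\times_Y X$ over $X$, which is itself a closed immersion inducing an isomorphism on truncations, and the square relating $\widetilde f$ for $a$ and $\widetilde f$ for $b$ commutes.

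The main step is then the following base-change/compatibility lemma: given a morphism $g\colon W'\to W$ of derived $X$-schemes which induces an isomorphism on truncations (so $\jmath$-type), the operator $g^{\ast}$ intertwines the identifications $G_0(\func{t}_0 W)\cong G_0(W)$ and $G_0(\func{t}_0 W')\cong G_0(W')$ with the identity on $G_0(\func{t}_0 W)=G_0(\func{t}_0 W')$; and pullback along $f$ commutes with these $\jmath$-pushforwards in the evident square. Both halves follow from functoriality of $(-)^{\ast}$ together with the recalled inverse formula $(\jmath_{W,\ast})^{-1}(\shname{G})=\sum_{i\ge 0}(-1)^i[\pi_i\shname{G}]$: applying $g^{\ast}$ and then $(\jmath_{W',\ast})^{-1}$ to $\jmath_{W,\ast}$ of a class on $\func{t}_0 W$, one uses that $\jmath_{W'}$ and $g$ fit into a commuting triangle with the common truncation, so the composite $(\jmath_{W',\ast})^{-1}g^{\ast}\jmath_{W,\ast}$ is $(\jmath_{W',\ast})^{-1}\jmath_{W',\ast}$ times the identity, i.e.\ the identity. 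Chasing this through diagram~\eqref{eq:def-vpullback-base} for $a$ versus for $b=\jmath_Y\circ\func{t}_0 a$ — the two diagrams map to each other compatibly with all the $\jmath$'s and with $\widetilde f$ — yields $f^{!,a}_{\mathrm{DAG}}=f^{!,b}_{\mathrm{DAG}}$, and hence $f^{!,a_1}_{\mathrm{DAG}}=f^{!,\func{t}_0 a_1}_{\mathrm{DAG}}=f^{!,\func{t}_0 a_2}_{\mathrm{DAG}}=f^{!,a_2}_{\mathrm{DAG}}$.

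The hard part will be bookkeeping rather than mathematics: one must verify that the map $Y'_0\times_Y X\to Y'\times_Y X$ induced on derived fibre products really does induce an isomorphism on truncations (this is where one uses that truncation commutes with fibre products only up to the strict fibre product, i.e.\ $\func{t}_0(Y'\times_Y X)=\func{t}_0 Y'\times^{\func{t}}_{\func{t}_0 Y}\func{t}_0 X$, precisely as recorded in the proof of~\cref{corlr:main-result}), and that the resulting cube of derived stacks commutes coherently so that the two instances of $\widetilde f^{\ast}$ are genuinely conjugate. I expect no genuine obstacle: once the $\jmath$-isomorphisms are granted from the theorem of the heart, everything is a diagram chase in the $(\infty,1)$-category of derived stacks combined with functoriality of pullback on bounded coherent sheaves.
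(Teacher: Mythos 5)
Your global strategy matches the paper's: reduce to comparing $a$ with $\jmath_{Y}\circ\func{t}_{0}a$, observe that the induced map $i\colon \func{t}_{0}Y'\times_{Y}X\to Y'\times_{Y}X$ on derived fibre products is a closed immersion inducing an isomorphism on truncations, and exploit that and a base-change square to move the pullback from the derived level to the truncated level. However, the ``main step'' lemma you state has the wrong variance, and the one-line argument you give for it is false. You claim that for a $\jmath$-type closed immersion $g\colon W'\to W$ the composite $(\jmath_{W',\ast})^{-1}\circ g^{\ast}\circ\jmath_{W,\ast}$ is the identity, on the grounds that $g^{\ast}\jmath_{W,\ast}=\jmath_{W',\ast}$. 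Since $\jmath_{W}=g\circ\jmath_{W'}$, the latter would read $g^{\ast}g_{\ast}\jmath_{W',\ast}=\jmath_{W',\ast}$, i.e.\ $g^{\ast}g_{\ast}=\mathrm{id}$. That fails for any non-invertible closed immersion $g$ (e.g.\ by the excess intersection~\cref{corlr:excess-intersct}, $g^{\ast}g_{\ast}$ is tensoring by a non-trivial Koszul algebra). Moreover $g^{\ast}$ need not even preserve $\cat{Coh}^{\mathrm{b}}$, since $\jmath$-type closed immersions are generally not of finite Tor-amplitude.

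What you actually need is the pushforward $i_{\ast}$, not a pullback, and the two ingredients are elementary once properly labelled: (1) the square
$\func{t}_{0}Y'\times_{Y}X\to\func{t}_{0}Y'$ over $Y'\times_{Y}X\to Y'$ is cartesian with $\jmath_{Y'}$ proper and $\widetilde{f}$ quasi-smooth, so proper base change gives $\widetilde{f}^{\ast}\jmath_{Y',\ast}=i_{\ast}\widehat{f}^{\ast}$ (this is your ``pullback along $f$ commutes with the $\jmath$-pushforwards'', which should be named and cited as base change, not waved at); and (2) the trivial functoriality $i_{\ast}\,\jmath_{\func{t}_{0}Y'\times_{Y}X,\ast}=\jmath_{Y'\times_{Y}X,\ast}$, which, since both $\jmath$-pushforwards are isomorphisms on $G_{0}$, gives $(\jmath_{Y'\times_{Y}X,\ast})^{-1}\,i_{\ast}=(\jmath_{\func{t}_{0}Y'\times_{Y}X,\ast})^{-1}$. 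Concatenating (1) and (2) gives the desired equality with no $g^{\ast}$ in sight. So: right decomposition, right auxiliary objects, but you should delete the $g^{\ast}$ ``lemma'' and replace it with the pushforward identity, and make the base-change step an explicit appeal to proper base change for the proper closed immersion $\jmath_{Y'}$.
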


\begin{proof}
  For any $a\colon Y^{\prime}\to Y$, we compare the virtual pullbacks
  induced by $a$ and
  $\truncat{Y}^{\prime}\xrightarrow{\truncat{a}}
  \truncat{Y}\xrightarrow{\jmath_{Y}}Y$.
  \begin{equation}
    \label{eq:vpllbck-trunc-base-diagr}
    \begin{tikzcd}[cramped]
      \truncat{Y}^{\prime}\lmtimes_{\truncat{Y}}^{\func{t}}\truncat{X}
      \arrow[rr,bend left=5,hook] \arrow[dr,bend right,hook] & &
      Y^{\prime}\times_{Y}X
      \arrow[rr,"\widetilde{f}"] & & Y^{\prime} \arrow[dd,"a"] \\
      & \truncat(Y^{\prime})\times_{Y}X \arrow[rr,near
      start,"\widehat{f}"'] \arrow[dr] \arrow[ur,"i"]
      \arrow[urrr,phantom,very near start,"\urcorner"] & &
      \truncat{Y^{\prime}} \arrow[dr,near
      start,"\jmath_{Y}\circ\truncat{a}"']
      \arrow[ur,hook] & \\
      & & X \arrow[rr,"f"'] \arrow[from=uu,crossing over] & & Y
    \end{tikzcd}
  \end{equation}
  The back square (the one exhibiting
  $\widetilde{f}\circ i\simeq \jmath_{Y^{\prime}}\circ\widehat{f}$) is
  cartesian and its side $\jmath_{Y^{\prime}}$ is a closed immersion
  and thus proper, so the base-change formula gives
  $\widetilde{f}^{\ast}\circ\jmath_{Y^{\prime},\ast}=i_{\ast}\widehat{f}^{\ast}$.
  Commutativity of the leftmost triangle implies that
  $i_{\ast}\jmath_{\truncat{Y}^{\prime}\times_{Y}X,\ast}
  =\jmath_{Y^{\prime}\times_{Y}X,\ast}$,
  and as both closed immersions involved induce isomorphisms in
  $G$-theory, we have
  $(\jmath_{Y^{\prime}\times_{Y}X,\ast})^{-1}i_{\ast}
  =(\jmath_{\truncat{Y^{\prime}}\times_{Y}X,\ast})^{-1}$. Putting the
  ingredients together, we finally obtain that
  \begin{equation}
    \label{eq:vpllbck-trunc-isom}
    f^{!,a}_{\mathrm{DAG}}
    \coloneqq(\jmath_{Y^{\prime}\times_{Y}X,\ast})^{-1}
    \widetilde{f}^{\ast}\jmath_{Y^{\prime},\ast}
    =(\jmath_{Y^{\prime}\times_{Y}X,\ast})^{-1}i_{\ast}\widehat{f}^{\ast}
    =(\jmath_{\truncat{Y^{\prime}}\times_{Y}X,\ast})^{-1}\widehat{f}^{\ast}
    \eqqcolon f^{!,\jmath_{Y}\circ\truncat{a}}_{\mathrm{DAG}}\text{.}
  \end{equation}
\end{proof}

\begin{remark}[Functoriality]\label{rmrk:vpllbck-fctorl}
  The virtual pullbacks satisfy obvious functoriality properties. Let
  $X\xrightarrow{f}Y\xrightarrow{g}Z$ be two composable arrows, and
  let $a\colon Z^{\prime}\to Z$ be a $Z$-scheme. We have the
  commutative diagram
  \begin{equation}
    \label{eq:vpllbck-fctrl-diagr}
    \begin{tikzcd}
      \truncat(Z^{\prime}\times_{Z}X)
      \arrow[dr,hook,"\jmath_{Z^{\prime}\times_{Z}X}"'] &
      \truncat(Z^{\prime}\times_{Z}X)
      \arrow[dr,hook,"\jmath_{Z^{\prime}\times_{Z}Y}"] & & \\
      & Z^{\prime}\times_{Z}X \arrow[d] \arrow[r,"\widetilde{f}"] &
      Z^{\prime}\times_{Z}Y \arrow[r,"\widetilde{g}"]
      \arrow[d,"b"] & Z^{\prime} \arrow[d,"a"] \\
      & X \arrow[r,"f"'] & Y \arrow[r,"g"'] & Z
    \end{tikzcd}
  \end{equation}

  It follows by associativity of fibre products that
  \begin{equation}
    \label{eq:vpllbck-fctrl-compat-comput}
    \begin{split}
      (\truncat{f})^{!,b}_{\mathrm{DAG}}\circ(\truncat{g})^{!,a}_{\mathrm{DAG}}
      &=(\jmath_{(Z^{\prime}\times_{Z}Y)\times_{Y}X,\ast})^{-1}
      \circ\widetilde{f}^{\ast}\circ(\jmath_{Z^{\prime}\times_{Z}Y,\ast})
      \circ(\jmath_{Z^{\prime}\times_{Z}Y,\ast})^{-1}
      \circ\widetilde{g}^{\ast}\circ\jmath_{Z^{\prime},\ast}\\
      &=(\jmath_{Z^{\prime}\times_{Z}X,\ast})^{-1}
      \circ\widetilde{gf}^{\ast}\circ\jmath_{Z^{\prime},\ast}
      \eqqcolon(\truncat(gf))^{!,a}_{\mathrm{DAG}}\text{.}
    \end{split}
  \end{equation}
\end{remark}

\subsection{Comparison with the construction from obstruction
  theories}
\label{sec:comp-with-constr}

\begin{construct}[Virtual pullbacks from perfect obstruction
  theories]
  \label{constr:vpllbck-thickn-obstr-thry}
  Let $g\colon V\to W$ be a morphism of Artin stacks of
  Deligne--Mumford type (\emph{i.e.} relatively DM) endowed with a
  perfect obstruction theory
  $\varphi\colon E\to\mathbb{L}_{g\colon V/W}$, inducing the closed
  immersion
  $\varphi^{\vee}\colon\mathfrak{C}_{g\colon
    V/W}\hookrightarrow\mathfrak{E}$, where
  $\mathfrak{E}=\truncat(\mathbb{V}_{V}(E[1]^{\vee}))$ is the
  vector bundle (Picard) stack associated with $E$ and
  $\mathfrak{C}_{g}$ is the intrinsic normal cone of $g$ (constructed
  in~\cite{behrend97}). As in~\cite{mann18:_brane_gromov_witten_k} we
  define a derived thickening $\mathbb{R}^{\varphi}V$ of $V$ as the
  derived intersection
  \begin{equation}
    \label{eq:vpllbck-thickn-obstr-thry-diagr}
    \begin{tikzcd}
      \mathbb{R}^{\varphi}V \arrow[dr,phantom,very near
      start,"\lrcorner"] \arrow[d,"p"'] \arrow[r,"q"] &
      \mathfrak{C}_{g} \arrow[d,hook,"\varphi^{\vee}"] \\
      V \arrow[r,hook,"0_{\mathfrak{E}}"'] & \mathfrak{E}
    \end{tikzcd}\text{.}
  \end{equation}
  Note that the arrow $p$ is a retract of $\jmath_{V}$, and provides a
  splitting of the induced perfect obstruction theory
  $\jmath_{V}^{\ast}\mathbb{L}_{\mathbb{R}^{\varphi}V}\to\mathbb{L}_{V}$. We
  may use it to define a map of derived stacks
  $\mathbb{R}^{\varphi}g\colon
  \mathbb{R}^{\varphi}V\xrightarrow{p}V\xrightarrow{f}W$ which is a
  derived thickening of $g$.
\end{construct}

We also recall the construction of the virtual pullback
$g^{!}_{\varphi}$, or $g^{!}_{\mathrm{POT}}$, from the perfect
obstruction $\varphi$, defined in~\cite{manolache12:_virtual} for Chow
homology then~\cite{qu18:_virtual_k} for $G_{0}$-theory.

Let $a\colon W^{\prime}\to W$ and write
$g^{\prime}\colon V^{\prime}\to W^{\prime}$ the base-change of $g$.
Recall that one may define a deformation space (constructed
in~\cite[Theorem 4.1.13]{khan19:_virtual_cartier} for quasi-smooth
closed immersions of derived stacks, and extended by~\cite[Proposition
7.6.2]{hekking21:_graded} to arbitrary closed immersions)
$\mathfrak{D}_{V^{\prime}}W^{\prime}$ over $\mathbb{P}^{1}_{\Bbbk}$,
with general fibre $W^{\prime}$ giving the open immersion
$\mathfrak{j}\colon W^{\prime}\times\mathbb{A}^{1}_{\Bbbk}
\hookrightarrow\mathfrak{D}_{V^{\prime}}W^{\prime}$, and special fibre
$\mathfrak{C}_{g^{\prime}}$ giving the complementary closed immersion
$\mathfrak{i}\colon\mathfrak{C}_{g^{\prime}}\times\{\infty\}
\hookrightarrow\mathfrak{D}_{V^{\prime}}W^{\prime}$. It follows that
there is an exact sequence of abelian groups
$G_{0}(\mathfrak{C}_{g^{\prime}})\to
G_{0}(\mathfrak{D}_{V^{\prime}}W^{\prime})\to
G_{0}(W^{\prime}\times\mathbb{A}^{1})\to0$ (coming from the fibred
sequence of $G$-theory spectra). Furthermore, as (by excess
intersection) $\mathfrak{i}^{\ast}\mathfrak{i}_{\ast}$ is equivalent
to tensoring by the symmetric algebra on the conormal bundle of
$\mathfrak{C}_{g^{\prime}}$ in $\mathfrak{D}_{V^{\prime}}W^{\prime}$
and as the latter is trivial, we have
$\mathfrak{i}^{\ast}\mathfrak{i}_{\ast}=0$, inducing a map
$G_{0}(W^{\prime}\times\mathbb{A}^{1})\to
G_{0}(\mathfrak{C}_{g^{\prime}})$: concretely, any section
$\mathfrak{j}^{\ast,-1}$ of $\mathfrak{j}^{\ast}$ gives the same map
when post-composed with $\mathfrak{i}^{\ast}$ so we do have a
well-defined map $\mathfrak{i}^{\ast}\mathfrak{j}^{\ast,-1}$. The
specialisation map
$\specialis\colon G_{0}(W^{\prime})\to
G_{0}(\mathfrak{C}_{g^{\prime}})$ is then defined by precomposing it
by
$\pr^{\ast}\colon G_{0}(W^{\prime})\to
G_{0}(W^{\prime}\times\mathbb{A}^{1})$. Finally, the cartesian square
defining $V^{\prime}$ induces by~\cite[Proposition
2.26]{manolache12:_virtual} a closed immersion
$c\colon\mathfrak{C}_{g^{\prime}}\hookrightarrow
a^{\ast}\mathfrak{C}_{g}=V^{\prime}\times_{V}\mathfrak{C}_{g}$, and
the virtual pullback $g^{!,a}_{\varphi}$ along $g$ is constructed as
the composite
\begin{equation}
  \label{eq:vpullback-pot-def}
  g^{!,a}_{\varphi}\colon G_{0}(W^{\prime})
  \xrightarrow{\specialis}G_{0}(\mathfrak{C}_{g^{\prime}})
  \xrightarrow{c_{\ast}}
  G_{0}(a^{\ast}\mathfrak{C}_{g})
  \xrightarrow{(a^{\ast}\varphi^{\vee})_{\ast}}
  G_{0}(a^{\ast}\mathfrak{E}=V^{\prime}\times_{V}\mathfrak{E})
  \xrightarrow{0_{a^{\ast}\mathfrak{E}}^{\ast}}G_{0}(V^{\prime})\text{.}
\end{equation}

\begin{lemma}
  \label{lem:compar-vpllbck-obstr-thry}
  The virtual pullback
  $(\truncat\mathbb{R}^{\varphi}g)^{!}_{\mathrm{DAG}}$ as defined
  above for the map $\mathbb{R}^{\varphi}g$ coincides with the virtual
  pullback $g^{!}_{\varphi}$
  of~\cite{manolache12:_virtual,qu18:_virtual_k}: for any
  $a\colon W^{\prime}\to W$, we have
  $(\truncat\mathbb{R}^{\varphi}g)^{!,a}_{\mathrm{DAG}}
  =g^{!,a}_{\varphi}\colon G_{0}(W^{\prime})\to G_{0}(V^{\prime})$.
\end{lemma}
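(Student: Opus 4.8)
The plan is to fix a base change $a\colon W^{\prime}\to W$, write $g^{\prime}\colon V^{\prime}\to W^{\prime}$, with $V^{\prime}=\func{t}_{0}(W^{\prime}\times_{W}V)$, for the base change of $g$, and compare the two operators $G_{0}(W^{\prime})\to G_{0}(V^{\prime})$. First I reduce to $W^{\prime}$ classical: by~\cref{lem:vpllbck-indep-derived} the left-hand side depends only on $\func{t}_{0}a$, and the same holds for $g^{!,a}_{\varphi}$ because the maps $\specialis$, $c_{\ast}$, $(a^{\ast}\varphi^{\vee})_{\ast}$ and $0^{\ast}$ of~\eqref{eq:vpullback-pot-def} only involve the classical cones and bundle stacks over $\func{t}_{0}V^{\prime}$. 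Base-changing the defining square~\eqref{eq:vpllbck-thickn-obstr-thry-diagr} identifies $W^{\prime}\times_{W}\mathbb{R}^{\varphi}V$ with the derived intersection of the zero section of (the pullback of) $\mathfrak{E}$ with (the pullback of) the obstruction cone $\mathfrak{C}_{g}$; as $0_{a^{\ast}\mathfrak{E}}$ is a quasi-smooth closed immersion, this derived stack has truncation $V^{\prime}$, so both operators are indeed valued in $G_{0}(V^{\prime})$.

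The key reduction is to peel a derived pullback off the classical construction. In the base change of the cartesian square~\eqref{eq:vpllbck-thickn-obstr-thry-diagr}, one leg $a^{\ast}\varphi^{\vee}$ is a proper closed immersion and the opposite leg $0_{a^{\ast}\mathfrak{E}}$ is quasi-smooth, so proper base change in $G$-theory gives $0^{\ast}_{a^{\ast}\mathfrak{E}}\circ(a^{\ast}\varphi^{\vee})_{\ast}=\widetilde{p}_{\ast}\circ\widetilde{q}^{\,\ast}$, with $\widetilde{p},\widetilde{q}$ the two remaining legs and the intermediate $G$-theory group that of the corner, a derived thickening of $V^{\prime}$. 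Since $\widetilde{p}$ is the base change of the retraction $p$ of $\jmath_{\mathbb{R}^{\varphi}V}\colon V\hookrightarrow\mathbb{R}^{\varphi}V$, the operator $\widetilde{p}_{\ast}$ is inverse to the corresponding thickening isomorphism, so~\eqref{eq:vpullback-pot-def} collapses to $g^{!,a}_{\varphi}=(\jmath_{\ast})^{-1}\circ\widetilde{q}^{\,\ast}\circ c_{\ast}\circ\specialis$. On the other hand $(\func{t}_{0}\mathbb{R}^{\varphi}g)^{!,a}_{\mathrm{DAG}}=(\jmath_{\ast})^{-1}\circ\widetilde{\mathbb{R}^{\varphi}g}^{\,\ast}$ (the factor $\jmath_{W^{\prime},\ast}$ being the identity), so, after identifying the thickenings of $V^{\prime}$, the claim reduces to the single equality $\widetilde{\mathbb{R}^{\varphi}g}^{\,\ast}=\widetilde{q}^{\,\ast}\circ c_{\ast}\circ\specialis$ in $G_{0}(V^{\prime})$.

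This last equality is the $G$-theoretic incarnation of the deformation to the normal cone. Factoring $\mathbb{R}^{\varphi}g=g\circ p$ gives $\widetilde{\mathbb{R}^{\varphi}g}^{\,\ast}=\widetilde{p}^{\,\ast}\circ\widetilde{g}^{\,\ast}$, where only the composite need preserve bounded coherence since $p$, and hence $\mathbb{R}^{\varphi}g$, is quasi-smooth; on the other side $\widetilde{q}^{\,\ast}$ restricts along the quasi-smooth immersion $q$, which by the excess formula of~\cref{corlr:excess-intersct} cancels the obstruction-bundle directions and leaves the $\mathbb{R}^{\varphi}V$-part. The remaining point is that $c_{\ast}\circ\specialis$ is the $G$-theoretic shadow of restriction along $\mathfrak{C}_{g^{\prime}}\to V^{\prime}\xrightarrow{g^{\prime}}W^{\prime}$: I would realise the classical deformation space $\mathfrak{D}_{V^{\prime}}W^{\prime}$ over $\mathbb{P}^{1}$ as the truncation of a derived deformation space for $g^{\prime}$ — which exists precisely because $\mathbb{R}^{\varphi}g$ is quasi-smooth — with generic fibre $W^{\prime}$ and special fibre the derived intrinsic normal cone, compatibly with $\mathfrak{i}$, $\mathfrak{j}$ and $c$ (the vanishing $\mathfrak{i}^{\ast}\mathfrak{i}_{\ast}=0$ driving $\specialis$ being the case of~\cref{corlr:excess-intersct} for a trivial line bundle), then run $\widetilde{\mathbb{R}^{\varphi}g}^{\,\ast}$ through this deformation and discard the $\mathbb{A}^{1}$-factor by the $\mathbb{A}^{1}$-invariance of $G$-theory, the same $\mathbb{G}_{\mathrm{a}}$-invariance used already in~\cref{corlr:zero-sec-k-thry}. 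Producing and comparing these deformation spaces is the main obstacle; granting it, the two pullbacks agree for every $a$, which proves the lemma. One may streamline this via the functoriality of~\cref{rmrk:vpllbck-fctorl} and its classical counterpart in~\cite{manolache12:_virtual,qu18:_virtual_k}, reducing to $a=\id_{W}$ and then to the two building blocks $0_{\mathfrak{E}}$, handled by~\cref{corlr:excess-intersct}, and $g$ itself.
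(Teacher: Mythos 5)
Your argument follows the paper's proof essentially step for step: base-change the cartesian square \eqref{eq:vpllbck-thickn-obstr-thry-diagr}, use proper base change plus the fact that $p'$ retracts $\jmath_{\mathbb{R}^{\varphi}V'}$ to collapse $0_{a^{\ast}\mathfrak{E}}^{\ast}(a^{\ast}\varphi^{\vee})_{\ast}$ to $(\jmath_{\ast})^{-1}\circ q'^{\ast}$, reducing the claim to $q'^{\ast}\circ c_{\ast}\circ\specialis=(\mathbb{R}^{\varphi}g')^{\ast}$, which is then settled by the deformation space $\mathfrak{D}_{V'}W'$ together with $\mathbb{A}^{1}$-invariance of $G$-theory. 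The paper is just as terse on that last step (it merely asserts that $\mathfrak{D}_{V'}W'$ ``provides exactly an interpolation'' and transports along $\mathbb{A}^{1}$-invariance), so the step you flag as the main obstacle is indeed the one point where the paper also leaves details to the reader.
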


Note that this is essentially also proved as~\cite[Proposition
6.8]{khan22:_k_g}.

\begin{proof}
  We adapt the results of~\cite[Proposition 3.5]{joshua10:_rieman} to
  the more general case of a morphism that need not be a regular
  embedding.
  
  Let again $a\colon W^{\prime}\to W$ and write
  $g^{\prime}\colon V^{\prime}\to W^{\prime}$ the base-change of $g$.
  We now review our construction of the virtual pullback from derived
  thickenings from the point of view of the perfect obstruction
  theory. The map $g^{!,a}_{\mathrm{DAG}}$
  of~\cref{def:virt-pullback-bivar} is computed in the following way:
  we define a derived thickening $\mathbb{R}^{\varphi}V^{\prime}$ of
  $V^{\prime}=V\times_{W}^{\func{t}}W^{\prime}$ as
  $V^{\prime}\times_{\mathfrak{E}}\mathfrak{C}_{g}$; note that we have
  $\mathbb{R}^{\varphi}V^{\prime}=V^{\prime}\times_{V}\mathbb{R}^{\varphi}V$
  and writing
  $p^{\prime}\colon\mathbb{R}^{\varphi}V^{\prime}\to V^{\prime}$ we
  obtain a derived thickening
  $\mathbb{R}^{\varphi}g^{\prime}=g^{\prime}\circ p^{\prime}
  \colon\mathbb{R}^{\varphi}V^{\prime}\to W^{\prime}$ of
  $g^{\prime}$. Then $g^{!,a}_{\mathrm{DAG}}$ is the pullback along
  $\mathbb{R}^{\varphi}g^{\prime}$ followed by the inverse of
  $\jmath_{\mathbb{R}^{\varphi}V^{\prime},\ast}$.
  
  We also note that the fibred product
  $V^{\prime}\times_{a^{\ast}\mathfrak{E}}(a^{\ast}\mathfrak{C}_{g})$
  is the base-change of $V\times_{\mathfrak{E}}\mathfrak{C}_{g}$ along
  $a^{\prime}\colon V^{\prime}\to V$, so the square
  \begin{equation}
    \label{eq:vpllbck-thickn-obstr-thry-diagr-basechgd}
    \begin{tikzcd}
      \mathbb{R}^{\varphi}V^{\prime} \arrow[dr,phantom,very near
      start,"\lrcorner"] \arrow[d,"p^{\prime}"']
      \arrow[r,"q^{\prime}"] &
      a^{\ast}\mathfrak{C}_{g} \arrow[d,hook,"a^{\ast}\varphi^{\vee}"] \\
      V^{\prime} \arrow[r,hook,"0_{a^{\ast}E}"'] &
      a^{\ast}\mathfrak{E}
    \end{tikzcd}
  \end{equation}
  is cartesian. As $p^{\prime}$ is proper, we have
  $0_{a^{\ast}\mathfrak{E}}^{\ast}(a^{\ast}\varphi^{\vee})_{\ast}
  =p^{\prime}_{\ast}q^{\prime,\ast}$; concomitantly, as $p^{\prime}$
  is a retract of $\jmath_{\mathbb{R}^{\varphi}V^{\prime}}$ we have in
  $G$-theory
  $(\jmath_{\mathbb{R}^{\varphi}V^{\prime},\ast})^{-1}=p^{\prime}_{\ast}$. We
  conclude that the virtual pullback of~\cite{qu18:_virtual_k}
  coincides with
  $(\jmath_{\mathbb{R}^{\varphi}V^{\prime},\ast})^{-1}\circ
  q^{\prime,\ast}\circ c_{\ast}\circ\specialis$, and thus it only
  remains to check that the latter part specialises to
  $(\mathbb{R}^{\varphi}g^{\prime})^{\ast} =p^{\prime,\ast}\circ
  g^{\prime,\ast}$. But the deformation space
  $\mathfrak{D}_{V^{\prime}}W^{\prime}$ provides exactly an
  interpolation between $g^{\prime}\colon V^{\prime}\to W^{\prime}$
  and $V^{\prime}\hookrightarrow\mathfrak{C}_{g^{\prime}}$, so by
  transporting this comparison along the $\mathbb{A}^{1}$-invariance
  of $G$-theory the~\namecref{lem:compar-vpllbck-obstr-thry} is
  proved.
\end{proof}

Recall that for any quasi-smooth morphism $f\colon X\to Y$ of derived
Artin stacks, by~\cite[Proposition 1.2]{schurg15:_deriv} the canonical
map
$\varphi\colon
j_{X}^{\ast}\mathbb{L}_{f}\to\mathbb{L}_{\truncat{f}}$ is a perfect
obstruction theory.

\begin{pros}
  \label{pros:compar-vpllbck-derivd-pot}
  Let $f\colon X\to Y$ be a quasi-smooth relatively DM map of derived
  Artin stacks. The virtual pullback
  $(\truncat{f})^{!}_{\mathrm{DAG}}$ defined with derived geometry
  is equal to
  $(\truncat\mathbb{R}^{\varphi}\truncat{f})^{!}_{\mathrm{DAG}}$,
  and thus to the virtual pullback $(\truncat{f})^{!}_{\varphi}$
  of~\cite{manolache12:_virtual,qu18:_virtual_k}, induced by the
  obstruction theory
  $\varphi\colon
  j_{X}^{\ast}\mathbb{L}_{f}\to\mathbb{L}_{\truncat{f}}$.
\end{pros}

\begin{proof}
  The proof is similar to the one sketched in~\cite[Proposition
  4.3.2]{mann18:_brane_gromov_witten_k} for the comparison of the
  virtual classes defined from perfect obstruction theories and
  derived geometry, which mainly
  followed~\cite{lowrey12:_groth_rieman_roch}: one constructs a
  deformation to the normal bundle of the closed immersion
  $\jmath_{X}\colon\truncat{X}\hookrightarrow X$, and finally uses
  that $G$-theory is $\mathbb{A}^{1}$-invariant. Note that the main
  ingredient which was missing to make the proof
  of~\cite{mann18:_brane_gromov_witten_k} precise, deformation to the
  normal cone for derived stacks, has now been constructed
  by~\cite{hekking21:_graded}.
\end{proof}

We shall henceforth simply write $(\truncat{f})^{!}$ for the virtual
pullback along $f$.

\begin{exmp}[Virtual classes]
  Suppose $Y=\spec(\Bbbk)$ so $f\colon X\to\spec(\Bbbk)$ is
  the structure morphism. The virtual structure sheaf of
  $\truncat{X}$ is
  $\virshf{\truncat{X}}=f^{!,\id_{X}}([\shname{O}_{\spec(\Bbbk)}])
  =(\jmath_{X,\ast})^{-1}([\shname{O}_{X}])$.
\end{exmp}

\begin{exmp}
  Suppose that the classical map $g$ is already a quasi-smooth
  immersion, so that $\id_{\mathbb{L}_{g}}$ is a perfect obstruction
  theory. Then the virtual pullback is given by the Gysin pullback
  $g^{!}$, studied in details for example in~\cite{joshua10:_rieman}.
\end{exmp}

\begin{remark}[Virtual pullbacks in generalised motivic homology
  theories]
  \label{rmrk:genal-hmlgy-thries}
  Our construction of virtual pullbacks only relies on the fact that
  $G$-theory is insensitive to the non-reduced structure, and the
  identification with the classical definition requires simply the
  specialisation morphism and, more generally, the
  $\mathbb{A}^{1}$-invariance. These ingredients are present in
  motivic homotopy theory (by construction for the
  $\mathbb{A}^{1}$-invariance, and by~\cite[Corollary
  3.2.9]{khan19:_morel_voevod} for the insentivity to derived
  structures), so the virtual pullbacks in motivic cohomology theories
  also admit the derived geometric interpretation.

  In fact such virtual pullbacks were constructed for motivic
  Borel--Moore homology with coefficients in any étale motivic
  spectrum in~\cite[Construction 3.4]{khan19:_virtual_i} from the
  virtual pullbacks canonically associated with a quasi-smooth derived
  enhancement (through its derived deformation space).
\end{remark}

\subsection{Recovering the quantum Lefschetz formula}
\label{sec:recov-quant-lefsch-kthry}

\begin{pros}
  With the notations of~\cref{sec:excess-inters-form}, if $\shname{F}$
  is a vector bundle then
  $(\truncat{u})_{\ast}\virshf{T}
  =\virshf{M}\otimes\lambda_{-1}(\pi_{0}\shname{F}^{\vee})$ in $G(M)$.
\end{pros}

\begin{proof}
  By naturality of the transformation $\jmath$, we have
  $(\truncat{u})_{\ast}=(\jmath_{M,\ast})^{-1}u_{\ast}\jmath_{T,\ast}$
  so that
  $(\truncat{u})_{\ast}(\truncat{u})^{!}
  =(\jmath_{M,\ast})^{-1}u_{\ast}u^{\ast}\jmath_{M,\ast}
  =(\jmath_{M,\ast})^{-1}(\jmath_{M,\ast}(-)\otimes\lambda_{-1}(\shname{F}^{\vee}))$
  by~\cref{corlr:zero-sec-k-thry}. Hence
  $(\truncat{u})_{\ast}\virshf{T}
  =(\truncat{u})_{\ast}(\truncat{u})^{!}\virshf{M}
  =(\jmath_{M,\ast})^{-1}(\lambda_{-1}(\shname{F}^{\vee}))$.

  By~\cite[Corollary 25.2.3.3]{lurie19:_spect_algeb_geomet}, as
  $\shname{F}^{\vee}$ is flat over $\shname{O}_{M}$ so are its
  exterior powers $\bigwedge^{n}(\shname{F}^{\vee})$. In particular,
  by~\cite[Proposition 2.2.2.5. (4)]{toen08:_homot} they are strong
  $\shname{O}_{M}$-modules, meaning that
  $\pi_{i}(\bigwedge^{n}\shname{F}^{\vee})
  \simeq\pi_{i}(\shname{O}_{M})\otimes_{\pi_{0}(\shname{O}_{M})}
  \pi_{0}(\bigwedge^{n}\shname{F}^{\vee})$ for all natural integers
  $i$ and $n$, and we conclude that
  \begin{equation}
    \label{eq:comput-virshf-strong}
    \begin{split}
      (\truncat{u})_{\ast}\virshf{T}
      &=\sum_{i\geq0}(-1)^{i}\sum_{n\geq0}(-1)^{n}
      \left[\pi_{i}\left(\bigwedge\nolimits^{n}\shname{F}^{\vee}\right)\right]\\
      &=\sum_{i\geq0}(-1)^{i}[\pi_{i}(\shname{O}_{M})]\otimes
      \sum_{n\geq0}(-1)^{n}\left[
        \bigwedge\nolimits^{n}\pi_{0}(\shname{F}^{\vee})\right]
  \end{split}
  \end{equation}
  as required.
\end{proof}

\begin{remark}
  \label{remark:conditions-convex-necess}
  In the setting of the quantum Lefschetz principle, the only cases in
  which $\mathbb{E}_{g,n}$ is a vector bundle are when $E$ is convex,
  that is $\mathbb{R}^{1}p_{\ast}f^{\ast}\shname{E}=0$ for any stable
  map $(p\colon C\to S,f\colon C\to X)$ from a rational curve $C$, and
  thus the genus is $g=0$, which is the setting in which the quantum
  Lefschetz principle is already known. We conclude that it is not
  possible to relax the hypotheses for the quantum Lefschetz principle
  in $G$-theory, and that the more general version is thus only valid
  in its categorified form.

  One may also notice that as the cotangent complex of $u$ is
  $\func{p}_{\ast}\ev^{\ast}\shname{E}^{\vee}[1]$, which has
  Tor-amplitude in $[-2,0]$ (in fact $[-2,-1]$) unless the above
  conditions are satisfied, so that $u$ is not quasi-smooth and the
  virtual pullback along it cannot be defined.
\end{remark}

\begin{corlr}
  \label{corlr:get-back-qlefshtz-gthry}
  If $\mathbb{E}_{0,n}=\func{p}_{0,n;\ast}\ev_{0,n}^{\ast}E$ is a
  vector bundle (that is if $E$ is convex), the $G$-theoretic quantum
  Lefschetz formula of~\cref{thm:state-art-qlefschetz} holds:
  \begin{equation}
    \label{eq:qlefsch-form-gthry-final}
    (\truncat{u})_{\ast}\sum_{i_{\ast}\gamma=\beta}\virshf{\compmodspo(Z,\gamma)}
    =\virshf{\compmodspo(X,\beta)}\otimes
    \lambda_{-1}(\pi_{0}\func{p}_{0,n;\ast}\ev_{0,n}^{\ast}\shname{E}^{\vee})\text{.}
  \end{equation}\qed{}
\end{corlr}

% \bibliographystyle{halpha}
% \bibliography{Biblio}
\printbibliography

\end{document}